\DeclareMathOperator{\Tate}{\mathsf{Tate}}
\DeclareMathOperator{\Calk}{\mathsf{Calk}}
\DeclareMathOperator{\elTate}{\mathsf{Tate}^{\mathsf{el}}}
\DeclareMathOperator{\Proa}{\mathsf{Pro^a}}
\DeclareMathOperator{\Res}{\mathsf{Res}}
\DeclareMathOperator{\crepsilon}{{}^{\mathsf{cr}}\varepsilon}
\DeclareMathOperator{\Bin}{\mathsf{BinCh}}
\DeclareMathOperator{\bliss}{\mathsf{bliss}}
\DeclareMathOperator{\Ch}{Ch_{\geq 0}}
\DeclareMathOperator{\PSh}{PrSh}
\newcommand{\Cc}{\mathcal{C}}
\DeclareMathOperator{\colim}{\mathsf{colim}}
\renewcommand{\lim}{\mathsf{lim}}
\DeclareMathOperator{\coker}{coker}
\newcommand{\C}{\mathsf{C}}
\newcommand{\Spectra}{Sp}
\DeclareMathOperator{\dual}{\vee}
\DeclareMathOperator{\Pro}{\mathsf{Pro}}
\DeclareMathOperator{\Loc}{\mathsf{Loc}}
\DeclareMathOperator{\Ind}{\mathsf{Ind}}
\DeclareMathOperator{\G}{\mathbb{G}}
\DeclareMathOperator{\Sch}{\mathsf{Sch}}
\DeclareMathOperator{\red}{red}
\begin{document}
\newtheorem{definition}{Definition}[section]
\newtheorem{theorem}[definition]{Theorem}
\newtheorem{proposition}[definition]{Proposition}
\newtheorem{corollary}[definition]{Corollary}
\newtheorem{conj}[definition]{Conjecture}
\newtheorem{lemma}[definition]{Lemma}
\newtheorem{rmk}[definition]{Remark}
\newtheorem{cl}[definition]{Claim}
\newtheorem{example}[definition]{Example} 
\newtheorem{claim}[definition]{Claim}
\newtheorem{ass}[definition]{Assumption}
\newtheorem{warning}[definition]{Dangerous Bend}
\newtheorem{porism}[definition]{Porism}

\date{}
\author{Michael Groechenig}

\title{De Rham epsilon factors for flat connections on higher local fields\let\thefootnote\relax\footnotetext{This project has received funding from the European Union's Horizon 2020 research and innovation programme under the Marie Sk\l odowska-Curie Grant Agreement No. 701679. \\ \includegraphics[height = 1cm,right]{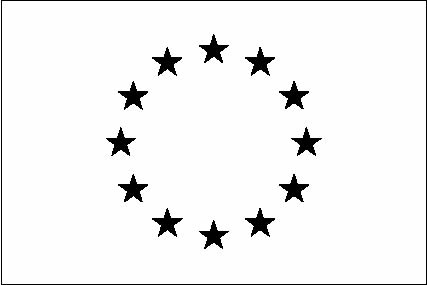}
} 
}
\maketitle 

\abstract{This note is a companion to the author's \emph{Higher de Rham epsilon factors}. Using Grayson's binary complexes and the formalism of $n$-Tate spaces we develop a formalism of graded epsilon lines, associated to flat connections on a higher local field of characteristic $0$. The definition is based on comparing a Higgs complex with a de Rham complex on the same underlying vector bundle.}

\tableofcontents
 
\section{Linear differential equations on higher local fields}

This section is devoted to a study of flat connections $(E,\nabla)$ on the higher local field $F_n = k((t_1))\cdots((t_n))$, where $k$ denotes as always a field of characteristic $0$. A lot of the material will be familiar to the expert. We include the often well-known proofs to demonstrate that the behaviour of differential equations on $F_n$ is not any different from the theory of $D$-modules on algebraic varieties. We do not lay claim to originality in this subsection.

\subsection{Definitions and basic properties}

\begin{definition}
Let $(E,\nabla)$ be a flat connection on $F_n$. The corresponding de Rham complex is the chain complex of $F_n$-vector spaces
$\Omega_{\nabla}^{\bullet} = [E \to^{\nabla} E \otimes \Omega_X^1 \to^{\nabla} \cdots \to^{\nabla} E \otimes \Omega_X^n].$ For $i \in \mathbb{Z}$ we denote the $i$-th cohomology group of this complex by $H^i_{\nabla}(E)$.
\end{definition}

\subsection{Cyclic vectors}\label{sub:cyclic}

The proof of the following lemma follows the lecture notes on algebraic $D$-modules by Braverman--Chmutova \cite[Lecture 3]{bc}. We denote by $D$ the ring of formal differential operators on $F_1 = k((t))$, which we define to be the free $k$-algebra $F_1\langle\partial\rangle$ modulo the relation $[\partial,t] =1$. It is clear that the centre is given by  $Z(D) = k$ (relying heavily on the assumption that $k$ has characteristic $0$).

\begin{lemma}[Cyclic Vector Lemma]\label{lemma:cyclic}
Let $(E,\nabla)$ be a flat connection on $F_1 = k((t))$. Then, as a $D$-module, there exists a cyclic vector $s \in E$, that is, a generator $Ds =E$.
\end{lemma}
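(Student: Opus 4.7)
Set $n := \rk_{F_1} E$. The key reduction is that $Ds = E$ is equivalent to the $F_1$-linear independence of $s, \nabla_\partial s, \ldots, \nabla_\partial^{n-1} s$: if these $n$ vectors are independent, a dimension count forces them to span $E$, whence $Ds \supseteq E$; conversely, if they are dependent, one proves by induction on $k \geq n$ that every $\nabla_\partial^k s$ already lies in the $F_1$-span $V$ of $\{s, \nabla_\partial s, \ldots, \nabla_\partial^{n-2}s\}$. (The induction step uses the Leibniz rule: if $\nabla_\partial^{k-1} s = \sum_{j \leq n-2} g_j \nabla_\partial^j s$, then $\nabla_\partial^k s = \sum_j \partial(g_j) \nabla_\partial^j s + \sum_j g_j \nabla_\partial^{j+1} s$, and every summand lies in $V$.) Thus $Ds \subseteq V$ has $F_1$-rank $\leq n-1$, so $Ds \neq E$.

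Fix an $F_1$-basis $e_1, \ldots, e_n$ of $E$ and let $A \in M_n(F_1)$ be the connection matrix, so that $\nabla_\partial$ acts on coordinate vectors as $L := \partial I_n + A$. Then for $s = \sum_i f_i e_i$, the task becomes to exhibit $\vec{f} \in F_1^n$ with
\[
W(\vec{f}) := \det\bigl[\vec{f} \mid L\vec{f} \mid \cdots \mid L^{n-1}\vec{f}\bigr] \neq 0 \quad \text{in } F_1.
\]

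I would prove this nonvanishing by a density argument. Viewing the coefficients of $\vec{f}$ (as a power series in $t$, truncated at a sufficiently high order) as indeterminates over $k$, $W(\vec{f})$ expands as a Laurent series in $t$ whose coefficients are polynomials in these indeterminates. Since $k$ is an infinite field, it suffices to exhibit a specialization making $W$ nonzero. Concretely, I would try $\vec{f}(t) = (t^{a_1}, \ldots, t^{a_n})^T$ with integers $a_1 < \cdots < a_n$ spread sufficiently far apart: the ``pure derivative'' contribution $\det[\partial^k \vec{f}]_{k=0}^{n-1}$ equals the generalized Vandermonde $\prod_{i<j}(a_j - a_i) \cdot t^{\sum_i a_i - n(n-1)/2}$, a nonzero element of $F_1$ by the characteristic-zero hypothesis.

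The main obstacle is controlling the contribution of $A$: because $A$ has entries in $F_1 = k((t))$ which may have arbitrarily high polar order at $t = 0$, terms in $L^k \vec{f}$ involving factors of $A$ can have lower $t$-adic valuation than $\partial^k \vec{f}$, so the Vandermonde is not automatically the leading term of $W(\vec{f})$. I would resolve this by choosing the exponents $a_i$ so widely spread (relative to the pole orders in $A$) that, in the expansion of $W$ as a sum over permutations, the pure-derivative contribution uniquely attains the minimal $t$-adic valuation and hence cannot be cancelled by the remaining terms. Characteristic zero enters twice: to ensure the Vandermonde factor is nonzero, and to invoke the density argument over the infinite field $k$.
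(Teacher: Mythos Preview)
Your reduction to the Wronskian condition is correct (modulo a small sloppiness: if the first linear relation among $s,\nabla_\partial s,\ldots,\nabla_\partial^{n-1}s$ occurs before index $n-1$, your induction should use that smaller index, not $n-2$; the conclusion $Ds\neq E$ is unaffected). But the main step has a genuine gap: the valuation--domination claim is false, and in fact spreading the exponents moves you in the wrong direction.

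Take $n=2$, $\vec f=(t^a,t^b)^T$ with $a<b$. Expanding $W(\vec f)=\det[\vec f\mid L\vec f]$ with $L=\partial+A$ gives
\[
W(\vec f)=(b-a)\,t^{a+b-1}\;+\;A_{21}t^{2a}\;+\;(A_{22}-A_{11})t^{a+b}\;-\;A_{12}t^{2b}.
\]
The pure-derivative (Vandermonde) piece has valuation $a+b-1$, while the $A_{21}$--correction has valuation $2a+v(A_{21})$. For the Vandermonde to be the unique term of minimal valuation you would need $a+b-1<2a+v(A_{21})$, i.e.\ $b-a<1+v(A_{21})$. Since $b-a\ge 1$, this forces $v(A_{21})\ge 1$. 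Thus whenever $A_{21}$ has a pole at $t=0$ (the irregular case, which is exactly the interesting one over $k((t))$) --- or even when $A_{21}(0)\neq 0$ --- the Vandermonde is \emph{not} the leading term, no matter how you choose $a,b$. Worse, increasing the gap $b-a$ increases the difference $(a+b-1)-(2a+v(A_{21}))=b-a-1-v(A_{21})$, pushing the Vandermonde further away from the bottom. So ``spreading the exponents'' cannot rescue the argument; the density reduction is then empty because you have no witness.

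The strategy can be repaired, but it needs an extra idea you have not supplied: for instance, first apply the ring automorphism $t\mapsto t+c$ of $k((t))$ for some $c\in k^\times$, which replaces $A(t)$ by $A(t+c)\in M_n(k[[t]])$ and brings you to a point where the connection is nonsingular; then a (more careful) leading-term argument of Katz type goes through. Without such a regularising step, your plan does not close.

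For comparison, the paper takes a completely different, purely ring-theoretic route: it shows that $D$ is a simple ring, then argues by induction on the length of $E$ as a $D$-module. Given a short exact sequence $0\to F\to E\to M\to 0$ with $F$ simple and $m\in M$ cyclic (by induction), one shows that if no lift of $m$ is cyclic for $E$ then $\mathrm{Ann}(m)$ is contained in the two-sided ideal $\mathrm{Ann}(F)$, which by simplicity is $0$ or $D$; either case yields a contradiction with finite-dimensionality. This avoids any explicit construction or valuation estimate and works uniformly, irregular singularities included.
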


\begin{proof}
The proof begins by observing that $D$ does not have any proper two-sided ideals, thus is a simple $k$-algebra. Indeed, let $I \subset D$ be a two-sided ideal, distinct from the trivial ideal $D$. It is clear that the intersection $F_1 \cap I$ must be $\{0\}$. We will use the tautological filtration on $D$ (also known as the order of differential operators) to establish that $I$ must be the zero ideal. If there is a non-zero element in $I$, then there is a minimal integer $r \geq 1$, such that $I$ contains an element $p$ of order $\leq r$. Since $p$ cannot be central in $D$ (as $Z(D)=k$) we obtain the existence of either a differential operator $q$ of order $\leq 1$, such that $[p,q] \neq 0$. The definition of two-sided ideals shows that $[p,q] \in I$. Hence it is a non-zero element of $I$ of order $\leq r-1$ which contradicts the minimality assumption on $r$.

Since $E$ is a finite-dimensional $F_1$-vector space, it has finite length as a $F_1$-module and therefore also as a $D$-module. We assume by induction that the existence of a cyclic vector has already been established for $D$-modules of smaller length than the one of $E$ (the case of length $1$ being automatically true). There exists a short exact sequence of $D$-modules
$$0 \to F \to E \to^{\phi} M \to 0,$$
such that $F$ has length $1$. Let $m \in M$ be a cyclic vector for $M$. We assume by contradiction that $\phi^{-1}(m)$ does not contain a cyclic vector for $E$. Then we have $Ds \cap F = \{0\}$. Let $p \in D$ be a differential operator, such that $pm = 0$, and $f \in F$ an arbitrary element of $F$. We have $\phi(s+f) = m$, and therefore also $D(s +f)\cap F=\{0\}$. Since $p(s+f) = pf \in D(s+f) \cap F$, we must have that $pf = 0$. Therefore, the ideal $\mathrm{Ann}(m)$ contains the two-sided ideal given by $\mathrm{Ann}(F)$. 

This leaves us with two options to consider, $\mathrm{Ann}(F)$ equal to $\{0\}$ or $D$. The first case is not possible as then $F$ would be isomorphic to $D$ and hence $\dim_{F_1} F = \infty$ which is impossible. We conclude that $\mathrm{Ann}(m) = D$ which implies $M=0$ and therefore simplicity of $E$.
\end{proof}

\begin{corollary}
Let $\nabla$ be a flat connection on the trivial rank $m$ bundle $F_1^{\oplus m}$. There exist elements $a_0,\dots,a_{m-1} \in F_1$, such that the $k$-vector space of $\nabla$-flat sections is isomorphic to the $k$-vector space of solutions to the ordinary differential equation $y^{(m)} + a_{m-1}y^{(m-1)} + \cdots + a_0 y = 0$.
\end{corollary}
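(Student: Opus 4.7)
The plan is to apply the Cyclic Vector Lemma (Lemma \ref{lemma:cyclic}) and extract the coefficients $a_i$ from the expansion of $\partial^m s$ in a basis built from the cyclic vector. Let $s \in E$ be a cyclic vector, so $Ds = E$. The first step is to verify that $\{s, \partial s, \ldots, \partial^{m-1}s\}$ is an $F_1$-basis of $E$; since $\dim_{F_1} E = m$, $F_1$-linear independence is what is needed. If there were a non-trivial dependence, one chooses one of minimal length, giving $\partial^k s = \sum_{i < k} b_i \partial^i s$ with $k$ minimal and necessarily $k \leq m-1$. Applying $\partial$ and invoking the Leibniz rule, and using the relation itself to eliminate the $\partial^k s$ that reappears, produces an expression for $\partial^{k+1}s$ in the $F_1$-span of $\{s, \partial s, \ldots, \partial^{k-1}s\}$. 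Iterating, every $\partial^j s$ lies in this $k$-dimensional subspace, so $Ds$ does as well, contradicting $Ds = E$ since $k < m$.

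With the basis in hand, $\partial^m s$ admits a unique expansion
$$\partial^m s + a_{m-1}\partial^{m-1}s + \cdots + a_0 s = 0$$
for some $a_0, \ldots, a_{m-1} \in F_1$, which are the coefficients appearing in the statement. Setting $p = \partial^m + a_{m-1}\partial^{m-1} + \cdots + a_0 \in D$, cyclicity of $s$ promotes the surjection $D \twoheadrightarrow E$, $q \mapsto q \cdot s$, to an isomorphism $D/Dp \xrightarrow{\sim} E$ of left $D$-modules.

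It remains to identify the $k$-vector space of flat sections with the solution space of the scalar ODE $py = 0$. In the basis $(s, \partial s, \ldots, \partial^{m-1}s)$ the connection $\nabla$ takes (a variant of) the companion form associated to $p$, and the classical equivalence between an $m$-th order scalar linear ODE and its first-order $m \times m$ companion system (via $y \leftrightarrow (y, y', \ldots, y^{(m-1)})$) produces the required $k$-linear bijection. The main obstacle lies precisely in this final step: one must verify that the change-of-basis from the standard basis of $F_1^{\oplus m}$ to $(s, \partial s, \ldots, \partial^{m-1}s)$ really converts $\nabla$ into the expected companion system, and track signs carefully so that the extracted scalar equation is exactly $y^{(m)} + a_{m-1}y^{(m-1)} + \cdots + a_0 y = 0$ rather than one differing by a sign convention on the $a_i$.
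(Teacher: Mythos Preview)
Your proof is correct and follows the same strategy as the paper: use a cyclic vector $s$ to present $E$ as $D/Dp$ with $p=\partial^m+\sum a_i\partial^i$, and then read off the scalar equation. You in fact supply more detail than the paper, which merely notes linear dependence of the $(m+1)$-tuple $s,\partial s,\dots,\partial^m s$ in the $m$-dimensional space $E$ and leaves both the $F_1$-linear independence of $s,\dots,\partial^{m-1}s$ and the final identification of flat sections with solutions implicit.
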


\begin{proof}
Let $s$ be a cyclic vector. The $F_1$-dimension of $E = F_1^{\oplus m}$ is $m$, which implies linear dependence of the $(m+1)$-tuple $s,\partial s,\cdots \partial^m s$. Therefore we can write $\partial^m s = -\sum_{i=0}^{m-1}a_i \partial^is$, for certain elements $a_i \in F_1$. We conclude that the $D$-module $E$ is equivalent to the quotient $D/(\partial^m + \sum_{i=0}^{m-1} a_i \partial^i)$. 
\end{proof}

Furthermore the cyclic vector lemma for $F_1$ implies the existence of cyclic vectors for $F_n$.

\begin{corollary}\label{cor:cyclic}
Let $(E,\nabla)$ be a flat connection on $F_n$. Then $E$ is cyclic as a module over $D_n = D_{F_n}$.
\end{corollary}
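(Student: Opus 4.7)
The plan is to reduce to the one-variable Cyclic Vector Lemma by singling out the last coordinate of $F_n$. Writing $F_n = F_{n-1}((t_n))$, I regard $E$ as a module over the subring $A := F_n\langle \partial_{t_n}\rangle$ of $D_n$, obtained by restricting the connection $\nabla$ to its component in the $t_n$-direction. Formally, $A$ is exactly the ring $D$ from Lemma~\ref{lemma:cyclic}, but with the ground field $k$ replaced by $F_{n-1}$, and $E$ remains a finite-dimensional $F_n$-vector space. Granting that the lemma applies to this situation, it produces $s \in E$ with $A \cdot s = E$, and a fortiori $D_n \cdot s = E$ since $A \subset D_n$.

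The step that needs checking is therefore that the proof of Lemma~\ref{lemma:cyclic} remains valid when the base field $k$ is replaced by an arbitrary field of characteristic $0$. Inspecting that proof, the only appeal to the characteristic-$0$ hypothesis is in the identification $Z(D) = k$, which is needed to force the commutator $[p,q]$ of a non-central $p$ with some order-$1$ operator $q$ to be non-zero. The very same commutator computation applied inside $A$ yields $Z(A) = F_{n-1}$, using only that $F_{n-1}$ has characteristic $0$, which is automatic since it is a higher local field over $k$. The rest of the argument—the finite-length induction, the choice of a length-one submodule $F \subset E$, the short exact sequence and the annihilator chase forcing $\operatorname{Ann}(m) = D$—is purely algebraic and makes no further use of the ground field.

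I do not anticipate a genuine obstacle: the lemma is effectively already proved in a form that is insensitive to the base field of characteristic $0$, and the corollary is then a one-line consequence of applying it in the correct relative setting. If one prefers a more self-contained write-up, one may alternatively induct on $n$, the inductive step being precisely the argument above with $F_{n-1}$ playing the role of $k$.
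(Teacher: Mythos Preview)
Your argument is correct and matches the paper's own proof essentially verbatim: both single out the subring generated by $F_n$ and $\partial_{t_n}$, identify it with the one-variable Weyl algebra over $F_{n-1}$, and invoke Lemma~\ref{lemma:cyclic} there to obtain a cyclic vector which is then \emph{a fortiori} cyclic for $D_n$. Your additional check that the proof of Lemma~\ref{lemma:cyclic} is insensitive to replacing $k$ by any characteristic-zero field is a welcome clarification that the paper leaves implicit.
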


\begin{proof}
The ring $D_n$ contains a subring $D' \subset D_n$ which is generated by $F_n = F_{n-1}((t_n))$ and $\partial_n = \frac{\partial}{\partial t_n}$. The ring $D'$ is canonically isomorphic to $D_{F_{n-1}((t_n))/F_{n-1}}$. As a $D'$-module, $E$ is cyclic by virtue of Lemma \ref{lemma:cyclic}. This implies that $E$ is cyclic as a $D$-module.
\end{proof}

\subsection{Formal de Rham cohomology is finite-dimensional}\label{sub:deRham}

\begin{lemma}\label{lemma:wronski}
An $m$-tuple of formal Laurent series $y_1,\dots y_m \in k((t))$ is $k$-linearly independent if and only if the Wronskian $W(y_1,\dots,y_m)=|(y_i^{(j-1)})_{1 \leq i,j\leq m}|$ is non-zero.
\end{lemma}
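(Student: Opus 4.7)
My plan is to prove the two directions of the biconditional separately, with all the interesting content lying in the implication ``$k$-linearly independent $\Rightarrow W \neq 0$,'' which I would establish by induction on $m$.

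The easy implication, that $k$-linear dependence forces the Wronskian to vanish, requires no induction: if $\sum_{i=1}^m c_i y_i = 0$ with the $c_i \in k$ not all zero, then differentiating $j-1$ times yields $\sum_i c_i y_i^{(j-1)} = 0$ for every $j$, so the rows of the Wronskian matrix are $k$-linearly dependent and its determinant vanishes.

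For the converse, I would argue by induction on $m$; the base case $m=1$ is trivial. Suppose the lemma holds for $m-1$ and that $W(y_1,\dots,y_m) = 0$. I may assume without loss of generality that $y_1,\dots,y_{m-1}$ are $k$-linearly independent, for otherwise $y_1,\dots,y_m$ are already dependent. By the inductive hypothesis $W(y_1,\dots,y_{m-1}) \neq 0$, so the $m \times m$ matrix $M = (y_i^{(j-1)})$ has a non-vanishing $(m-1) \times (m-1)$ minor but vanishing determinant, hence $F_1$-rank equal to $m-1$. Its last row is therefore an $F_1$-linear combination of the first $m-1$: there exist $c_1,\dots,c_{m-1} \in F_1$ with
$$y_m^{(j-1)} = \sum_{i=1}^{m-1} c_i \, y_i^{(j-1)} \quad \text{for } j = 1,\dots,m.$$
Differentiating the relation indexed by $j$ and comparing with the relation indexed by $j+1$ yields $\sum_{i=1}^{m-1} c_i' \, y_i^{(j-1)} = 0$ for $j = 1,\dots,m-1$. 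Since the Wronskian matrix of $y_1,\dots,y_{m-1}$ is invertible over $F_1$, this forces $c_i' = 0$ for every $i$.

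The decisive input, and the one place where the characteristic-zero hypothesis plays an essential role, is the observation that the kernel of $\partial = d/dt$ on $F_1 = k((t))$ is exactly $k$: a formal Laurent series annihilated by $\partial$ must be a constant. (In characteristic $p$ this fails, as $t^p$ would be a counterexample.) Consequently $c_i \in k$ for every $i$, and the relation $y_m = \sum_i c_i y_i$ provides the sought-after $k$-linear dependence. Beyond this conceptual point the argument is routine linear algebra, so I do not anticipate any genuine obstacles; the only place to be careful is the bookkeeping when passing from the $F_1$-relations among rows of $M$ to the vanishing of the derivatives $c_i'$.
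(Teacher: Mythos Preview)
Your argument is correct, but it follows a genuinely different route from the paper's. The paper proceeds by normalising: assuming $y_1\neq 0$, it divides through by $y_1$ (which rescales the Wronskian by $y_1^{-m}$) to reduce to the case $y_1=1$, then uses the determinantal identity $W(1,y_2,\dots,y_m)=W(y_2',\dots,y_m')$ to bring the induction hypothesis to bear on the $(m-1)$-tuple $y_2',\dots,y_m'$; finally it ``integrates'' the resulting relation $\sum_{i\ge 2}\lambda_i y_i'=0$ (again using $\ker\partial=k$) to recover a $k$-linear dependence among the $y_i$. By contrast, you stay with the full $m\times m$ matrix, use the nonvanishing $(m-1)$-minor to extract an $F_1$-linear row relation with coefficients $c_i$, and then show $c_i'=0$ directly by comparing consecutive derivative relations. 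Both arguments hinge on the same characteristic-zero fact $\ker\partial=k$; the paper's version is a bit slicker via the $W(1,\dots)=W(\dots')$ identity, while yours is a straightforward rank argument that avoids the division-by-$y_1$ normalisation and makes the passage from an $F_1$-relation to a $k$-relation completely explicit.
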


\begin{proof}
It is clear that linear dependence of the $m$-tuple $y_1,\dots,y_m$ implies vanishing of the Wronskian. We prove the converse by induction on $m$, anchored to the case $m = 1$ which is a tautology. Let us assume that the statement has been proven for $(m-1)$-tuples and that $y_1 \neq 0$. This allows us to divide every element of the $m$-tuple by $y_1$ (which changes the Wronskian by a factor of $y_1^{-m}$) and does not affect $k$-linear independence. Henceforth we assume without loss of generality $y_1 = 1$ and compute
$$0=W(1,y_2,\dots,y_m) = W(y_2',\dots,y_m').$$
By assumption this shows that the $(m-1)$-tuple $y_2',\dots, y_m'$ is $k$-linear independent. Let $\lambda_2,\dots,\lambda_m \in k$ be scalars bearing witness to this fact, i.e., $\sum_{i=2}^m \lambda_i y_i' = 0$. Integrating this equation we obtain the existence of a scalar $\lambda_1 \in k$, such that $\lambda_1 \cdot 1 + \sum_{i=2}^m\lambda_i y_i = 0$. Since we assumed $y_1 = 1$ this establishes a linear dependence and hence concludes the proof.
\end{proof}

\begin{corollary}\label{cor:wronski}
Let $a_0,\dots,a_{m-1} \in k((t))$. The $k$-vector space of solutions to the ordinary differential equation 
$$y^{(m)} + a_{m-1} y^{(m-1)} + \cdots + a_0 y = 0$$
is of dimension at most $m$. 
\end{corollary}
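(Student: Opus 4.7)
The plan is to argue by contradiction, reducing the bound to a vanishing statement for a Wronskian and then invoking Lemma \ref{lemma:wronski}. Suppose $y_1,\dots,y_{m+1} \in k((t))$ are any $m+1$ solutions of the given ODE. It suffices to show they are $k$-linearly dependent; by Lemma \ref{lemma:wronski}, this is equivalent to showing that the Wronskian $W(y_1,\dots,y_{m+1}) = \det\bigl((y_i^{(j-1)})_{1\leq i,j\leq m+1}\bigr)$ vanishes.

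To see that this Wronskian vanishes, I would look at the last column of the matrix $(y_i^{(j-1)})$, whose entries are $y_i^{(m)}$ for $i=1,\dots,m+1$. Using the differential equation we can rewrite each such entry as
\[
y_i^{(m)} = -\sum_{k=0}^{m-1} a_k \, y_i^{(k)},
\]
which exhibits the last column as the $k((t))$-linear combination $-\sum_{k=0}^{m-1} a_k \cdot C_{k+1}$ of the preceding columns $C_1,\dots,C_m$. Since the determinant is multilinear in its columns over the coefficient ring $k((t))$, this column relation forces $W(y_1,\dots,y_{m+1})=0$. Applying Lemma \ref{lemma:wronski} in the contrapositive direction then yields the desired $k$-linear dependence, so the solution space has dimension at most $m$.

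There isn't really a substantial obstacle here; the entire content is recognizing that the ODE gives precisely the column relation needed to collapse the Wronskian of $m+1$ would-be independent solutions. The only mild subtlety to flag is that the linear dependence among columns has coefficients in $k((t))$ (not in $k$), which is exactly what is required for the determinant to vanish, and it is Lemma \ref{lemma:wronski} that upgrades this to the $k$-linear dependence of the $y_i$ themselves.
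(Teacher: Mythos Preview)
Your proof is correct and follows essentially the same argument as the paper: take $m+1$ solutions, use the ODE to exhibit the last column of the Wronskian matrix as an $F_1$-linear combination of the others, conclude the Wronskian vanishes, and apply Lemma \ref{lemma:wronski}. The only cosmetic differences are that the paper phrases the dependence as one among ``rows'' (arguably a slip, since with the stated indexing it is indeed a column relation as you wrote), and you should avoid reusing $k$ as a summation index when it already denotes the ground field.
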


\begin{proof}
Let $y_1,\dots,y_{m+1}$ be solutions to the ordinary differential equation above. We claim that they are $k$-linearly dependent. As we have seen above this is equivalent to vanishing of the Wronkian $W(y_1,\dots,y_{m+1}) = |(y_i^{(j-1)})_{1 \leq i,j\leq m+1}|$. The $F_1$-linear relation $y_i^{(m)} + a_{m-1}y_i^{(m-1)} + \cdots + a_0y_i = 0$ (which holds for all $i$ by assumption) describes a $F_1$-linear relation between the rows of the matrix $(y_i^{(j-1)})_{1 \leq i,j\leq m+1}$. This implies vanishing of the Wronskian.
\end{proof}

\begin{proposition}\label{prop:finite_dimensional}
Let $(E,\nabla)$ be a flat connection on $F_1=k((t))$. The $k$-vector spaces $\ker(\nabla)$ and $\coker(\nabla)$ are finite-dimensional.
\end{proposition}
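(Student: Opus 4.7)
The plan is to reduce the proposition, via the Cyclic Vector Lemma, to the corresponding statement for a single scalar differential operator on $F_1$, and then to apply Corollary \ref{cor:wronski} twice: directly for the kernel, and via a residue--duality argument for the cokernel.

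By Lemma \ref{lemma:cyclic} we may pick a cyclic vector $s \in E$; since $\dim_{F_1} E = m < \infty$, the family $s, \partial s, \ldots, \partial^m s$ is $F_1$-linearly dependent, which yields a monic operator $L = \partial^m + a_{m-1}\partial^{m-1} + \cdots + a_0$ annihilating $s$ and hence an isomorphism $E \simeq D/DL$ of $D$-modules. The space $\ker(\nabla)$ of flat sections then corresponds, via the corollary immediately following Lemma \ref{lemma:cyclic}, to the $k$-vector space of formal solutions $\{y \in F_1 : Ly = 0\}$, whose dimension is bounded by $m$ thanks to Corollary \ref{cor:wronski}; this settles the kernel.

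For the cokernel, I plan to produce a quasi-isomorphism between the two-term de Rham complex $[E \xrightarrow{\nabla} E \otimes \Omega^1]$ and a two-term complex of the form $[F_1 \xrightarrow{L^{*}} F_1]$, where $L^{*}$ denotes the formal adjoint of $L$. Concretely, tensoring the free resolution $0 \to D \xrightarrow{\cdot L} D \to D/DL \to 0$ over $D$ against the right $D$-module $\Omega^1_{F_1/k}$ converts right multiplication by $L$ on $F_1 \cdot dt$ into left multiplication by $L^{*}$ on $F_1$, yielding the desired identification of cohomology. Granted this reduction, the residue pairing $(f, g) \mapsto \mathrm{Res}_{t=0}(fg\, dt)$ on $F_1$ identifies $\mathrm{image}(L^{*})^{\perp}$ with $\ker((L^{*})^{*}) = \ker(L)$, which again has dimension at most $m$ by Corollary \ref{cor:wronski}. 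A brief argument that $\mathrm{image}(L^{*})$ is closed in the $t$-adic topology of $F_1$ (a standard fact for scalar differential operators, verified by an order-of-pole recursion) then gives $\dim_k \coker(L^{*}) \leq m$ and therefore $\dim_k \coker(\nabla) \leq m$. The principal obstacle is the first half of this step: pinning down the quasi-isomorphism with the correct adjoint and sign conventions arising from the left/right $D$-module dichotomy is the only delicate bookkeeping in the proof, after which the residue-pairing argument is essentially formal.
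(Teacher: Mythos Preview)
Your argument for $\ker(\nabla)$ is identical to the paper's. For $\coker(\nabla)$ your approach is correct but organised differently from the paper's, and it is worth seeing the contrast.

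The paper does not pass through a scalar operator on the cokernel side at all. Instead it applies residue duality directly at the level of the connection: the $t$-adic topology makes $E$ and $E\otimes\Omega^1_{F_1}$ into Tate $k$-vector spaces, the residue pairing identifies the topological dual of $E$ with $E^{\vee}\otimes\Omega^1_{F_1}$, and under this identification the topological dual of $\nabla$ is (up to sign) the dual connection $\nabla^{\vee}$ on $E^{\vee}$. Finite-dimensionality of $\coker(\nabla)$ then follows from finite-dimensionality of $\ker(\nabla^{\vee})$, which is the kernel case already established, applied to the dual local system.

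Your route---reduce $[E\xrightarrow{\nabla}E\otimes\Omega^1]$ to $[F_1\xrightarrow{L^*}F_1]$ via $\Omega^1\otimes^{\mathbf L}_D(D/DL)$, then pair on $F_1$ to identify $(\operatorname{im}L^*)^{\perp}$ with $\ker L$---is a valid computation of the same $\operatorname{Tor}$ groups and reaches the same residue-duality conclusion, just one categorical level down. The cost is exactly the bookkeeping you flag: tracking the right-$D$-module structure on $\Omega^1$ and the resulting formal adjoint. The paper's version sidesteps this entirely, since dualising $(E,\nabla)\mapsto(E^{\vee},\nabla^{\vee})$ stays within the category of connections and needs no cyclic presentation. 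On the other hand, your scalar reduction makes the closedness of the image genuinely checkable by the pole-order recursion you mention (in fact $\operatorname{im}L^*$ contains an open lattice, hence is open and therefore closed); the paper's proof leaves the corresponding step implicit.
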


\begin{proof}
We begin by showing finite-dimensionality of the kernel. As we have seen in Subsection \ref{sub:cyclic}, there exists an $m$-th order linear differential equation $y^{(m)} + a_{m-1}y^{(m-1)} + \cdots 
+ a_0 y = 0$ whose solutions form a $k$-vector space isomorphic to $\ker(\nabla)$. Corollary \ref{cor:wronski} shows that this $k$-vector space is finite-dimensional.

Finite-dimensionality of the cokernel is shown by dualising. Since the $k$-vector spaces $E$ and $E \otimes \Omega_{F_1}^1$ are infinite-dimensional, it helps to endow them with the $t$-adic topology. This is the topology induced by the natural valuation on $F_1 = k((t))$ (which is discrete on $k$). With respect to this topoology, $E$ is a so-called linearly locally compact $k$-vector space in the sense of Lefschetz (also known as Tate $k$-vector spaces). The residue pairing yields an isomorphism of the topological dual $E^{\prime}$ with the Tate $k$-vector space $E^{\vee} \otimes \Omega_{F_1}^1$ (where $E^{\dual}$ refers to the $K_{1}$-linear dual). With respect to this isomorphism, the dual of the connection $\nabla$ can be seen to be the map induced by the dual connection. We conclude finite-dimensionality of the cokernel of $\nabla$ from finite-dimensionality of the kernel of the dual connection $\nabla^{\vee}$.
\end{proof}

\begin{corollary}\label{cor:finite_dimensional}
Let $(E,\nabla)$ be a flat connection on $F_n$. For all degrees $i \in \mathbb{Z}$ the de Rham cohomology groups $H^i_{\nabla}(E)$ are finite-dimensional $k$-vector spaces.
\end{corollary}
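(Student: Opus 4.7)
\medskip\noindent\emph{Proof plan.} The strategy is induction on $n$, with base case $n=1$ supplied by Proposition \ref{prop:finite_dimensional} (for $n=1$ the de Rham complex lives only in degrees $0,1$, where the cohomology groups are $\ker\nabla$ and $\coker\nabla$). For the induction step I would reduce from $F_n = F_{n-1}((t_n))$ to $F_{n-1}$ by decomposing the de Rham complex as the total complex of a natural bicomplex.

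First I would check that Proposition \ref{prop:finite_dimensional} holds verbatim with $k$ replaced by any field $L$ of characteristic $0$ and $F_1$ replaced by $L((t))$: the cyclic vector lemma, the Wronskian count of solutions, and the Tate-duality argument for the cokernel are all insensitive to the identity of the coefficient field as long as it has characteristic $0$. Applied to $L = F_{n-1}$ and to the single derivation $\partial_n$ acting on the $F_n$-vector space $E$, this shows that $\ker(\partial_n)$ and $\coker(\partial_n)$ are finite-dimensional $F_{n-1}$-vector spaces. Because $\partial_1,\dots,\partial_{n-1}$ commute with $\partial_n$, they descend to each and equip it with a flat $F_{n-1}$-connection $\nabla'$.

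Next, using the decomposition $\Omega^p_{F_n} \cong (\Omega^p_{F_{n-1}} \otimes_{F_{n-1}} F_n) \oplus (\Omega^{p-1}_{F_{n-1}} \otimes_{F_{n-1}} F_n)\wedge dt_n$, I would present $\Omega^\bullet_\nabla$ as the total complex of a bicomplex with two nonzero rows, each isomorphic to the (infinite-dimensional) complex $E \otimes_{F_{n-1}} \Omega^\bullet_{F_{n-1}}$ with horizontal differential $\nabla' = \sum_{i<n}\partial_i\, dt_i$, the vertical differential being $\partial_n$. Taking vertical cohomology first yields the spectral sequence whose $E_2$-page satisfies $E_2^{p,0} = H^p_{\nabla'}(\ker\partial_n)$ and $E_2^{p,1} = H^p_{\nabla'}(\coker\partial_n)$, i.e., the de Rham cohomology of the two finite-dimensional flat connections on $F_{n-1}$ produced above.

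By the inductive hypothesis every entry of the $E_2$-page is a finite-dimensional $k$-vector space, hence so is $E_\infty$; the bicomplex being bounded the spectral sequence converges, and the induced finite filtration on $H^i_\nabla(E)$ with associated graded $\bigoplus_{p+q=i}E_\infty^{p,q}$ then gives the conclusion. The only genuinely non-routine point is verifying that the proof of Proposition \ref{prop:finite_dimensional} transfers to the base field $F_{n-1}$, which I expect to be a straightforward re-reading of the argument rather than new work; the rest is homological bookkeeping.
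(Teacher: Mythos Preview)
Your proposal is correct and follows essentially the same route as the paper: induction on $n$, realising $\Omega^\bullet_\nabla$ as the total complex of a two-row bicomplex coming from the splitting $\Omega^1_{F_n}=(\Omega^1_{F_{n-1}})_{F_n}\oplus\Omega^1_{F_n/F_{n-1}}$, applying Proposition~\ref{prop:finite_dimensional} over the base field $F_{n-1}$ to the vertical direction, and then invoking the induction hypothesis on the $E_2$-page. The only differences are cosmetic---the paper anchors the induction at $n=0$ rather than $n=1$, and it applies Proposition~\ref{prop:finite_dimensional} to $F_n=F_{n-1}((t_n))$ without pausing, as you do, to remark that the proof goes through verbatim with $k$ replaced by any characteristic-zero field.
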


Before giving the proof it seems appropriate to summarise the underlying ideas. We argue by induction on $n$, the case $n=0$ being a tautology. Let us assume that the assertion has been verified for flat connections on $F_{n-1}$. There is a canonical inclusion of fields $F_{n-1} \hookrightarrow F_n$, whose geometric counterpart is projection along the $t_n$-coordinate. The de Rham cohomology groups $H^i_{\nabla}(E)$ can be computed from a variant of the Leray spectral sequence relative to this map. Since $F_n = F_{n-1}((t_n))$ we can apply the finiteness result of Proposition \ref{prop:finite_dimensional} for the $n=1$ case and the induction hypothesis to conclude the result.

\begin{proof}
Let $(E,\nabla)$ be a flat connection on $F_n = F_{n-1}((t_n))$. We denote by $\Omega_{F_n/F_{n-1}}^1$ the $F_n$-linear subspace of $\Omega_{F_n}^1$, spanned by the elements $dt_n$. By definition it is orthogonal to the subspace 
$$(\Omega_{F_{n-1}}^1)_{F_n} \hookrightarrow \Omega_{F_n}^1,$$
generated by the elements $dt_1,\dots,dt_{n-1}$. By taking exterior powers and tensor products we obtain $F_n$-linear subspaces $(\Omega_{F_{n-1}}^p)_{F_n} \subset \Omega_{F_n}^p$ and $(\Omega_{F_{n-1}}^p)_{F_n} \otimes_{F_n} \Omega_{F_n/F_{n-1}}^1 \subset \Omega_{F_n}^{p+1}$.

There are natural differential operators $\nabla'\colon E \to E \otimes (\Omega_{F_{n-1}}^1)_{F_n}$, $\nabla''\colon E \to E \otimes \Omega_{F_n/F_{n-1}}^1$. More generally we may form the double complex
\[
\xymatrix{
E \ar[r]^-{\nabla'} \ar[d]_{\nabla''} & E \otimes (\Omega_{F_{n-1}}^1)_{F_n} \ar[r]^-{\nabla'} \ar[d] _{\nabla''}& \cdots \ar[r]^-{\nabla'} & E \otimes (\Omega_{F_{n-1}}^{n-1})_{F_n} \ar[d]_{\nabla''} \\
E \otimes \Omega_{F_n/F_{n-1}}^1 \ar[r]^-{\nabla'} & E \otimes (\Omega_{F_{n-1}}^1)_{F_n} \otimes \Omega_{F_n/F_{n-1}}^1 \ar[r]^-{\nabla'} & \cdots \ar[r]^-{\nabla'} & E \otimes (\Omega_{F_{n-1}}^{n-1})_{F_n} \otimes \Omega_{F_n/F_{n-1}}^1.
}
\]
Note that $\nabla = \nabla' + \nabla''$ and $\nabla' \nabla'' = - \nabla''\nabla'$. We conclude that the de Rham complex $\Omega_{\nabla}^{\bullet}(E)$ is isomorphic to the totalisation of this double complex. The associated spectral sequence is
$$E_2^{p,q} = H^p_{\nabla'}(H^q_{\nabla''}(E)) \Rightarrow H^{p+q}_{\nabla}(E).$$
By Proposition \ref{prop:finite_dimensional}, $H^q_{\nabla''}(E)$ is finite-dimensional, since $F_n=F_{n-1}((t))$. The induction hypothesis implies finite-dimensionality of $H^p_{\nabla'}(H^q_{\nabla''}(E))$. Therefore, every object appearing on the second page of our spectral sequence is finite-dimensional over $k$. Convergence of the spectral sequence (see \cite[Tag 0132]{stacks-project}) implies that the formal de Rham cohomology groups $H^{i}_{\nabla}(E)$ are of finite dimension over $k$.
\end{proof}

\section{De Rham epsilon-factors: definition and basic properties}

Given a flat connection $(E,\nabla)$ on the higher local field $F_n = k((t_1))\cdots((t_n))$ we define a formalism of epsilon-factors. Our theory depends on an $F_n$-linearly independent $n$-tuple of $1$-forms $\underline{\nu}=(\nu_1,\dots,\nu_n) \in \Omega^1_{F_n}$ and produces a graded (or super) line $\varepsilon_{\underline{\nu}}(E,\nabla)$. This graded line is naturally obtained from a homotopy point of the $K$-theory spectrum $K(k)$.

Our treatment begins with a recapitulation of the case $n = 1$, which is due to Beilinson--Bloch--Esnault \cite{MR1988970}. We reformulate their definition in terms of Grayson's binary complexes \cite{grayson2012algebraic}. This perspective allows us a glimpse at the higher-dimensional generalisation. We then define epsilon-lines in arbitrary dimension $n$, by using Grayson's iterated binary complexes, finite-dimensionality of formal de Rham cohomology (Corollary \ref{cor:finite_dimensional}), and crucially, almost-commutativity of the ring of differential operators.

\subsection{Recapitulation on Tate objects: categorical preliminaries}

The notion of Tate objects in exact categories goes back to Beilinson's \cite{MR923134} and Kato's \cite{MR1804933}, and is inspired by Lefschetz's theory of linearly locally compact vector spaces. We refer the reader to \cite{MR2872533} and \cite{Braunling:2014ys} for a detailed introduction to the theory and an overview of the history of Tate objects.

To an idempotent complete exact category $\C$ (see \cite{MR2606234} for an account of the general theory of exact categories) one defines an exact category of \emph{admissible} Ind objects $\Ind^a\C$. It is the full subcategory of the category of Ind objects $\Ind \C$, whose objects can be represented as a formal colimit $\colim_K X_k$, where for $F_1 \leq k_2 \in K$ the resulting morphism $X_{F_1} \to X_{k_2}$ is an admissible monomorphism in the sense of exact categories. The dual construction yields $\Pro^a\C$, an exact category of admissible Pro objects. 

The exact category $\Ind^a \Pro^a \C$ contains an extension closed full subcategory $\elTate(\C)$ whose objects are referred to as \emph{elementary Tate objects}. By definition, every elementary Tate object $V$ belongs to an admissible short exact sequence
$$L \hookrightarrow V \twoheadrightarrow D,$$
where $L \in \Pro^a\C$ and $D \in \Ind^a\C$. One also says that $L \subset V$ is a \emph{lattice}. The exact category of Tate objects $\Tate(\C)$ is defined to be the idempotent closure of $\elTate(\C)$.

The categorical quotient $\Ind^a(\C)/\C$ was studied by Schlichting in \cite{MR2079996} as the \emph{suspension category} $\mathcal{S}\C$. We denote its idempotent closure by $\Calk(\C)$ and refer to it as the category of \emph{Calkin objects}. There is a natural functor $\Tate(\C) \to \Calk(\C)$ given by sending $V$ to $V/L$ where $L$ is a lattice in $V$. In fact, one has $\elTate(\C)/\Pro^a(\C) \simeq \Ind^a(\C)/\C$.

\begin{rmk}
The most important input category $\C$ in this paper is $P_f(R)$, that is, the exact category of finitely generated projective $R$-modules. Here we denote by $R$ a commutative ring (with unit). In this case we will simply write $\Tate(R)$, in reference to $\Tate(P_f(R))$.
\end{rmk}

\subsection{A reformulation of BBE's theory}

For the purpose of this subsection we let $(E,\nabla)$ denote a flat connection on $F=F_1 = k((t))$. Recall that $E$ is an $F$-vector space and that $\nabla\colon E \to E\otimes \Omega_F^1$ is a $k$-linear map, satisfying the Leibniz identity. Since $F$ has a natural structure of a Tate vector space over $k$, so does $E$, and $\nabla$ can be easily seen to be a morphism in this category. Furthermore this almost defines an isomorphism of linearly locally compact vector spaces (in a technical sense).

\begin{lemma}
A connection $\nabla$ as above induces an isomorphism in the localised category $\elTate(k)/\Proa(k)$.
\end{lemma}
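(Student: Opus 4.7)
The plan is to realize $\nabla$ as a ``weak equivalence'' in the sense of Schlichting's localization, i.e.\ an admissible morphism in $\elTate(k)$ whose kernel and cokernel lie in $\Proa(k)$. Once this is established, the morphism becomes invertible in $\elTate(k)/\Proa(k)$ by the general construction of that quotient.

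The first step is to verify that $\nabla$ is genuinely a morphism in $\elTate(k)$, not merely of underlying $k$-vector spaces. After choosing an $\mathcal{O}_F$-basis of $E$, the natural $\mathcal{O}_F$-lattice $L \subset E$ realizes $E$ as an elementary Tate $k$-vector space via $E = \colim_{N} t^{-N}L$, and similarly for $E\otimes \Omega_F^1$ with some lattice $L'$. Since $\nabla$ is a differential operator of order one, the Leibniz rule shows that $\nabla(t^{-N} L) \subset t^{-N-c} L'$ for a constant $c$ depending only on the matrix of $\nabla$ in the chosen basis. This witnesses $\nabla$ as a morphism in $\Ind^a \Proa P_f(k)$, and hence in $\elTate(k)$.

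The second step invokes Proposition \ref{prop:finite_dimensional}: the kernel and cokernel of $\nabla$, computed in the abelian category of $k$-vector spaces, are finite-dimensional, and hence land in $P_f(k) \subset \Proa(k)$. The third step is to factor $\nabla$ as an admissible epi followed by an admissible mono, $E \twoheadrightarrow \image(\nabla) \hookrightarrow E\otimes \Omega_F^1$, and verify that both legs are admissible morphisms in $\elTate(k)$. For the first map one observes that any finite-dimensional subspace $K = \ker(\nabla) \subset E$ of a Tate space sits admissibly: after enlarging a lattice $L$ by $K$, the lattice $L + K$ contains $K$ as a split $k$-summand, making $K \hookrightarrow L+K$ admissible in $\Proa(k)$ and $K \hookrightarrow E$ admissible in $\elTate(k)$. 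The analogous discussion, applied to the finite-dimensional quotient $\coker(\nabla)$, makes $\image(\nabla) \hookrightarrow E\otimes\Omega_F^1$ an admissible monomorphism. Both maps then have kernel resp.\ cokernel in $\Proa(k)$, hence become isomorphisms in $\elTate(k)/\Proa(k)$, so $\nabla$ does as well.

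The main obstacle will be the admissibility verification of the third step: the finiteness of $\ker(\nabla)$ and $\coker(\nabla)$ gives only $k$-vector space data, and one must carefully translate this into the existence of compatible lattices that witness admissibility as Tate morphisms. Steps one and two are essentially bookkeeping given Proposition \ref{prop:finite_dimensional}, so the content of the lemma is really this lattice-theoretic repackaging.
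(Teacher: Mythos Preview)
Your proposal is correct and follows the same overall architecture as the paper: factor $\nabla$ through its image, use Proposition~\ref{prop:finite_dimensional} to see that kernel and cokernel are finite-dimensional (hence lie in $P_f(k)\subset\Proa(k)$), and verify that both legs of the factorisation are admissible so that each becomes an isomorphism in the Schlichting quotient.

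The one place where your argument diverges from the paper is the treatment of the cokernel side. The paper does not run an ``analogous discussion''; instead it invokes the residue pairing to identify the $n$-Tate dual of $\nabla$ with the dual connection $\nabla^{\vee}$ on $E^{\vee}$, and then reuses the kernel-side retraction argument verbatim for $\nabla^{\vee}$. Your ``analogous discussion'' is not literally the same argument (the image of $\nabla$ is not finite-dimensional, so it does not sit inside a lattice); what makes it work is that a $k$-linear section of $E\otimes\Omega_F^1\twoheadrightarrow\coker(\nabla)$ lands in a finite-dimensional subspace, which is then admissibly split inside any lattice containing it, exhibiting $\image(\nabla)$ as a direct summand. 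This is a valid and arguably more elementary route than the paper's duality trick, but you should spell it out rather than calling it ``analogous'', since the roles of subobject and quotient are genuinely swapped.
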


\begin{proof}
Consider the diagram of abstract $k$-vector spaces
\begin{equation}\label{diag:nabla}
\xymatrix{
& E \ar@{->>}[rd] \ar[rr]^-{\nabla} & & E\otimes \Omega_F^1 \ar@{->>}[rd] & & \\
K \ar@{^(->}[ru] & & E/K \ar@{^(->}[ru] & &  C
}
\end{equation}
where the diagonals are short exact sequences. The kernel $K$ and cokernel $C$ are finite-dimensional by virtue of Proposition \ref{prop:finite_dimensional}. Furthermore, the $k$-vector spaces $E$ and $E \otimes \Omega_F^1$ are endowed with a linearly locally compact topology (that is, they are Tate vector spaces). We claim that the diagram \eqref{diag:nabla} is a diagram in category of Tate vector spaces. This amounts to showing that all arrows represent continuous maps of topological vector spaces. Continuity of $\nabla$ has already been asserted and can be checked using an explicit presentation in terms of coordinates. It remains to verify that the diagonal morphisms are continuous. 

The $k$-linear map $i\colon K \hookrightarrow E$ is certainly continuous when endowing $K$ with the discrete topology. Since $K$ is finite-dimensional, this is the only possibility within the category of linearly locally compact $k$-vector spaces. Furthermore we assert that $i$ has a continuous retraction $r$. To construct $r$ one chooses a lattice $L \subset E$, such that $i(K) \subset L$. By definition of the category of linearly locally compact vector spaces there exists a retraction $r'\colon E \to L$. Moreover we know that the inclusion $K \hookrightarrow L$ has a retraction $r''$, since the category of linearly compact $k$-vector spaces is contravariantly equivalent to the category of discrete $k$-vector spaces (which is a split abelian category). We define $r = r'' \circ r'$. This shows that $E/K$ is a direct summand of $E$ and hence establishes continuity of the morphism $E \to E/K$.

Using the observation of the proof of Proposition \ref{prop:finite_dimensional} that the $k$-linear topological dual of the map $E \to E \otimes \Omega_F^1$ is equivalent to the dual connection $\nabla^{\vee}$, we obtain continuity of the maps on the right hand side of diagram \eqref{diag:nabla}.

In the category $\elTate(k)/\Proa(k)$ we have that morphisms with finite-dimensional kernel or cokernel are isomorphisms. In particular we see that $E \to E/K$ and $E/K \to E \otimes \Omega_F^1$ are isomorphisms in this localisation. Commutativity of the triangle in \eqref{diag:nabla} implies the assertion.
\end{proof}

In the article \cite{grayson2012algebraic}, Grayson approaches higher algebraic $K$-groups using the notion of (iterated) binary complexes. A binary complex in an additive category $\Cc$ is a complex with two differentials, respectively a pair of complexes sharing the same objects. More precisely it is defined to be a collection of objects $(X^j)_{j \in \mathbb{Z}}$ and morphisms $d_{1,j}, d_{2,j}\colon X^j \to X^{j+1}$ for all $j \in \mathbb{Z}$, such that for $i =1,2$ and all $j\in \mathbb{Z}$ we have  $d_{i,j+1} \circ d_{i,j} = 0$.

\begin{definition}
Let $\Cc$ denote an additive category. We denote by $\Bin(\Cc)$ the category whose objects are binary complexes $((X^j;d_{1,j},d_{2,j})_{j \in \mathbb{Z}})$, and morphisms 
$$((X^j;d_{1,j},d_{2,j})_{j \in \mathbb{Z}}) \to ((Y^j;d_{1,j},d_{2,j})_{j \in \mathbb{Z}})$$
are given by tuples $(f^j)_{j \in \mathbb{Z}}$, such that $d_{i,j} \circ f^j = f^{j+1} \circ d_{i,j}$ for $i=1,2$ and $j \in \mathbb{Z}$.
\end{definition}

\begin{definition}
Let $(E,\nabla)$ be a flat connection on $F=k((t))$ and $\nu \in \Omega_F^1$. We define $\Omega_{\nabla,\nu}^{\bullet}(E)$ to be the length $2$ binary complex, given by 
\[
[\xymatrix{
E \ar@<2pt>[r]^-{\nabla} \ar@<-2pt>[r]_-{\nu} & E \otimes \Omega_X^1
}]
\]
in the exact category $\Calk(k)$. For fixed $\nu$ this defines an exact functor $\Omega_{\nabla,\nu}^{\bullet}\colon \Loc(F) \to \Bin\left(\Tate(k)\right)$.
\end{definition}

One says that a binary complex $(X^{\bullet};d_1,d_2)$ in an exact category $\Cc$ is exact (or synonymously acyclic), if both differentials define an exact complex in $\Cc$ in the sense of \cite[Definition 8.8]{MR2606234}. We denote the corresponding extension-closed subcategory by $\Bin_{ex}(\Cc)$. Grayson shows in \cite{grayson2012algebraic} that there exists a canonical morphism of connective spectra $K(\Bin_{ex}(\Cc)) \to \Omega K(\Cc)$. In the article he assumes that $\Cc$ admits a calculus of long exact sequences, but this assumption is not needed to define the aforementioned morphism but only in proving the main result \cite[Theorem 4.3]{grayson2012algebraic} that $K(\Ch^{ex}(\Cc)) \to K(\Bin_{ex}(\Cc)) \to \Omega K(\Cc)$ is a fibre sequence of connected spectra. 

\begin{lemma}
Assume that $\nu$ is a non-zero $1$-form on $F$. Then for every flat connection $(E,\nabla)$, the binary complex $\Omega_{\nabla,\nu}^{\bullet}(E)$ is exact in the Calkin category. In particular we have an exact functor $$\Omega_{\nabla,\nu}\colon \Loc(F) \to \Bin_{ex}(\Calk(k)).$$
\end{lemma}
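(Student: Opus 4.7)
The plan is to reduce the exactness of the binary complex to the assertion that \emph{both} of its differentials, namely $\nabla$ and multiplication by $\nu$, become isomorphisms in $\Calk(k)$. Since $\Omega_{\nabla,\nu}^{\bullet}(E)$ is concentrated in two degrees, a single map in either differential; and a two-term complex $[A \to B]$ in an exact category is acyclic precisely when that map is an isomorphism (i.e., an admissible monomorphism with zero cokernel that is simultaneously an admissible epimorphism with zero kernel). So the entire task boils down to checking that each of $\nabla$ and $\nu$ is invertible in $\Calk(k)$.

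For the differential $\nabla$, I would simply transport the previous lemma, which states that $\nabla$ is an isomorphism in the localisation $\elTate(k)/\Proa(k)$. By the remarks recalled in the subsection on Tate objects, there is an equivalence $\elTate(k)/\Proa(k) \simeq \Ind^a(P_f(k))/P_f(k)$, whose idempotent completion is by definition $\Calk(k)$. Consequently the image of $\nabla$ under $\Tate(k) \to \Calk(k)$ is still an isomorphism.

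For the differential given by multiplication by $\nu$, I would argue directly. Writing $\nu = f\, dt$ with $f \in F^{\times}$ (possible because $\nu$ is a non-zero element of the one-dimensional $F$-vector space $\Omega^1_F$), the map $e \mapsto f e \otimes dt$ is $F$-linear, and multiplication by the unit $f \in F$ is an isomorphism of $F$-vector spaces. This is in particular a bicontinuous map of Tate $k$-vector spaces and hence an isomorphism in $\Tate(k)$, and a fortiori in the composite $\Tate(k) \to \Calk(k)$. Combining the two observations, both components of the binary complex $\Omega_{\nabla,\nu}^{\bullet}(E)$ are acyclic in $\Calk(k)$.

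It remains to upgrade this to the statement that $\Omega_{\nabla,\nu}$ is an \emph{exact} functor $\Loc(F) \to \Bin_{ex}(\Calk(k))$. This is essentially automatic: the assignment $(E,\nabla) \mapsto \Omega_{\nabla,\nu}^{\bullet}(E)$ is additive and functorial, and a short exact sequence of flat connections produces a short exact sequence of underlying Tate vector spaces in each degree (since tensoring with $\Omega_F^1$ over $F$ is exact) and is compatible with both differentials, hence descends to a short exact sequence of binary complexes in $\Calk(k)$. I do not anticipate a serious obstacle; the only non-formal input is the previous lemma, which was already done.
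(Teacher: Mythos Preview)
Your proposal is correct. The paper actually omits a proof of this lemma, presumably because it is meant to be read as an immediate consequence of the preceding lemma (that $\nabla$ is an isomorphism in $\elTate(k)/\Proa(k)$) together with the trivial observation that multiplication by a non-zero $\nu$ is already an isomorphism in $\Tate(k)$; your argument spells out precisely these two steps and the passage to $\Calk(k)$, so it is exactly the intended reasoning.
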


Consequently we have for $\nu \in \Omega_{F_1}^{1}\setminus \{0\}$ a well-defined map of $K$-theory spectra
$$\varepsilon_{\nu}\colon K(\Loc_{F_1}) \to K(k),$$
which we call the spectral $\varepsilon$-factor.

\subsection{Higher local fields and closed $1$-forms}

Let us denote by $\Loc_{F_n}$ the exact category of pairs $(E,\nabla)$, where $E$ is a finite rank $F_n$-vector space, and $\nabla$ a formal flat connection on $E$. In this subsection we will define for a $F_n$-linearly independent tuple of closed $1$-forms $\underline{\nu}=(\nu_1,\dots,\nu_n)$ on $F_n$ a map of spectra 
$$\varepsilon_{\underline{\nu}}\colon K(\Loc_{F_n}) \to K(k)$$
which generalises the epsilon-factor formalism for the $1$-local field $k((t))$. One could argue that the assumption that the forms $\nu_1,\dots,\nu_n$ are closed is a feature of the higher-dimensional case, since it is automatically true for $n= 1$. However we believe that it should not be needed in order to have a well-defined $\varepsilon_{\underline{\nu}}$ (see the companion article \cite{companion}). 

\begin{definition}\label{defi:multideRham}
Let $(E,\nabla)$ be a de Rham local system on $F_n$ and $\underline{\nu} \in (\Omega_{F_n}^1)^n$ denote an $F_n$-linearly independent tuple of closed $1$-forms. We define a multi-complex (whose totalisation is the formal de Rham complex) which is supported on the cube $\{0,1\}^n$ which we identify with the power set $\mathcal{P}(\{1,\dots,n\})$. For $M \subset \{1,\dots,n\}$ we use the notation
$$\Omega_{\nabla}^{M,\underline{\nu}}(E) = E \otimes F_n\langle \nu_{i_1} \wedge \cdots \wedge \nu_{i_{q}} \rangle \subset E \otimes \Omega_{F_n}^{q},$$
where $M = \{i_1 < \dots < i_{q}\}$. Furthermore, for every inclusion $j\colon M \hookrightarrow N=M \cup \{i\}$, we let
$$\nabla_j\colon \Omega_{\nabla}^{M,\underline{\nu}}(E) \to \Omega_{\nabla}^{N,\underline{\nu}}(E)$$ 
be the component of the connection $\nabla \colon E \otimes\Omega_{F_n}^{q} \to E \otimes\Omega_{F_n}^{q+1}$ with respect to the direct summands $\Omega_{\nabla}^{M,\underline{\nu}}(E)$ and $\Omega_{\nabla}^{N,\underline{\nu}}(E)$.
\end{definition}

It is important to emphasise that without the assumption that $\underline{\nu}$ is an $n$-tuple of closed forms, this definition would not produce a multicomplex. Indeed anti-commutativity of the resulting squares is guaranteed by this assumption. 

The proof of the following lemma is based on an observation on $n$-Tate objects. There is a natural embedding 
$$e \colon \C \hookrightarrow \Tate(\C).$$
This yields $n$ distinct ways to embed $\Tate^{n-1}(\C)$ into $\Tate^n(\C)$ which we denote by $e_1,\dots,e_n$.

\begin{lemma}
The multicomplex of Definition \ref{defi:multideRham} is acyclic in the category $\Calk^n(k)$.
\end{lemma}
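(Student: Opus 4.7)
My plan is to argue by induction on $n$. The base case $n=1$ is exactly the content of the previous lemma: the $\{0,1\}$-indexed cube reduces to the single arrow $\nabla\colon E\to E\otimes F_1\langle \nu_1\rangle$, which is an isomorphism in $\Calk(k)\simeq \elTate(k)/\Proa(k)$, so the length-two binary complex is acyclic in $\Calk(k)$.

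For the inductive step I write $F_n = F_{n-1}((t_n))$ and, after a preliminary choice of coordinates (available because the $\nu_i$ are closed and span $\Omega^1_{F_n}$ over $F_n$), assume that $\nu_n$ is aligned with $dt_n$ while $\nu_1,\dots,\nu_{n-1}$ lie in $(\Omega^1_{F_{n-1}})_{F_n}$. Under this normalisation $\nabla$ splits as $\nabla = \nabla' + \nabla''$ exactly as in the proof of Corollary~\ref{cor:finite_dimensional}, and the $\{0,1\}^n$-multicomplex becomes a bicomplex whose outer $\nu_n$-direction carries $\nabla''$ and whose inner $(n-1)$-subcube carries the $\nabla'$-differentials.

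The crux is to apply the previous lemma to $(E,\nabla'')$ regarded as a flat connection on the one-variable local field $F_{n-1}((t_n))$ over the base $F_{n-1}$: the arguments of Proposition~\ref{prop:finite_dimensional} and the previous lemma adapt verbatim to this relative setting, so $\nabla''$ becomes an isomorphism after Calkinising the outermost Tate direction of $F_n$. Through the embedding $e_n\colon \Tate^{n-1}(k)\hookrightarrow \Tate^n(k)$, each $\nu_n$-column of the bicomplex is therefore acyclic in $\Calk^n(k)$, and the bicomplex collapses in $\Calk^n(k)$ to its inner $(n-1)$-subcube acting on the $\nabla''$-homology. Because $\nabla'$ anticommutes with $\nabla''$ (by the closedness assumption on $\underline{\nu}$), this residual structure is a flat connection on a finite-rank $F_{n-1}$-vector space in the variables $t_1,\dots,t_{n-1}$, to which the induction hypothesis applies, yielding acyclicity in $\Calk^{n-1}(k)$; this acyclicity lifts to $\Calk^n(k)$ via $e_n$.

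The main technical obstacle is justifying the ``relative'' version of Proposition~\ref{prop:finite_dimensional} and the previous lemma, where the coefficient field $k$ is replaced by the $(n-1)$-Tate $k$-vector space $F_{n-1}$. No essentially new ideas beyond the one-variable arguments (cyclic vectors, Wronskians, and Tate duality) are required, but a careful adaptation of the Tate vector space formalism to bases that are themselves higher local fields is necessary.
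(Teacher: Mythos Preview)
Your inductive strategy hinges on the coordinate normalisation in the second paragraph, and that step does not go through. Closedness of the $\nu_i$ together with their spanning $\Omega^1_{F_n}$ does \emph{not} guarantee the existence of a presentation $F_n\simeq k((s_1))\cdots((s_n))$ with $\nu_n\in F_n\cdot ds_n$ and $\nu_1,\dots,\nu_{n-1}\in (\Omega^1_{F_{n-1}})_{F_n}$. The iterated Laurent series structure is rigid: a new parameter $s_n$ must be a uniformiser for the $t_n$-adic valuation. For a concrete obstruction take $n=2$, $\nu_1=dt_1+dt_2$, $\nu_2=dt_1-dt_2$. Any $s_2$ with $ds_2\parallel\nu_2$ is a function of $t_1-t_2$; but $t_1-t_2$ has $t_2$-adic valuation $0$ (its constant term in $t_2$ is $t_1\neq 0$), so every such $s_2$ does too, and hence cannot serve as the outer uniformiser. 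The same applies with the roles of $\nu_1,\nu_2$ swapped. Without this normalisation the dual vector $V_n$ is not a multiple of $\partial_n$, the operator $\nabla''$ is not a relative connection for $F_n/F_{n-1}$, and your Leray-type collapse does not get started. (Incidentally, what you flag as the ``main technical obstacle''---the relative form of Proposition~\ref{prop:finite_dimensional} over the base $F_{n-1}$---is the unproblematic part: $F_{n-1}$ is a field of characteristic~$0$, so cyclic vectors and Wronskians carry over verbatim.)

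The paper avoids this difficulty by arguing directly, without induction or normalisation. It proves that for \emph{any} nonzero vector field $V=\sum_{j\le i}a_j\,\partial_j$ with $a_i\neq 0$, the kernel and cokernel of $\nabla_V\colon E\to E$ lie in $e_i(\Tate^{n-1}(k))\subset\Tate^n(k)$; this uses only the one-variable finiteness (Proposition~\ref{prop:finite_dimensional}) applied over the characteristic~$0$ field obtained by deleting the $i$-th Laurent variable. Since each $e_i$ factors through the kernel of $\Tate^n\to\Calk^n$, every edge $\nabla_{V_m}$ of the cube (where $V_1,\dots,V_n$ is the basis dual to $\underline{\nu}$) becomes an isomorphism in $\Calk^n(k)$, and the multicomplex is acyclic. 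The point is that one does not need the dual vectors $V_m$ to be aligned with coordinate directions; one only needs each $V_m$ to have \emph{some} leading index $i$, which is automatic.
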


\begin{proof}
A direct computation involving writing out power series in the variables $t_1,\dots,t_n$ allows one to infer the following claim.
\begin{claim}
Let $V = \sum_{j=1}^i a_i \frac{\partial}{\partial t_j}$, where $a_j \in F_n$, and $a_i \neq 0$. Then the kernel and cokernel of the map
$$\nabla_{V}\colon E \to E$$
belongs to the full subcategory 
$$e_i(\Tate^{n-1})\hookrightarrow \Tate^n.$$
\end{claim}

Applying this lemma to the vector fields given by the dual basis of $\underline{\nu}$ we deduce a similar statement for the differentials of the multicomplex refining the de Rham complex of $E$. In particular, since $e_i$ factors through the kernel of $\Tate^n \to \Calk^n$, we deduce the lemma.
\end{proof}

\begin{definition}
For $(E,\nabla) \in \Loc_{F_n}$ and $\underline{\nu} \in (\Omega_{F_n}^1)^n$ a linearly independent $n$-tuple of closed $1$-forms, we define a binary multicomplex supported on $\{0,1\}^n$ in the sense of Grayson. It is constructed by adding extra differentials to the multicomplex $\Omega_{\nabla}^{\blacksquare}(E)$. For every inclusion $j\colon M \hookrightarrow N=M \cup \{i\}$, we let
$$\nu_i \colon \Omega_{\nabla}^{M,\underline{\nu}}(E) \to \Omega_{\nabla}^{N,\underline{\nu}}(E)$$ 
be the component of the morphism $\wedge \nu_i \colon E \otimes\Omega_{F_n}^{q} \to E \otimes\Omega_{F_n}^{q+1}$ with respect to the direct summands $\Omega_{\nabla}^{M,\underline{\nu}}(E)$ and $\Omega_{\nabla}^{N,\underline{\nu}}(E)$. This binary multi-complex will be denoted by $\mathcal{E}^{\blacksquare}_{\nabla,\underline{\nu}}(E)$.
\end{definition}

\begin{definition}
We have an exact functor $\mathcal{E}_{\underline{\nu}}\colon \Loc_{F_n} \to (\Bin_{ex})^n(\Calk^n(k))$, sending $(E,\nabla)$ to the binary multicomplex $\mathcal{E}^{\blacksquare}_{\nabla,\underline{\nu}}(E)$.
\end{definition}

\begin{definition}
We define the map of spectra $\varepsilon_{\underline{\nu}}\colon K(\Loc_n) \to K(k)$ as the composition
$$K(\Loc_n) \to^{K(\mathcal{E}_{\underline{\nu}})} K((\Bin_{ex})^n(\Calk^n(k))) \to K(k),$$
where the second map combines Grayson's $K((\Bin_{ex})^n(\Calk^n(k))) \to \Omega^nK(\Calk^n(k))$ with Saito's equivalence $K(\Tate^n(k)) \simeq K(k)$ (see \cite{Saito:2012fk}).
\end{definition}

\subsection{Induction}

Let $F'_n/F_n$ be a finite extension of $n$-fields with last residue fields $k'/k$, and $\underline{\nu} \in (\Omega_{F_n}^1)^n$ be an $F_n$-linearly independent $n$-tuple. Observe that we have $\Omega_{F_n}^1 \subset \Omega_{F'_n}^n$. A de Rham local system $(E,\nabla)$ on $F'_n$ gives rise to a de Rham local system on $F_n$. The resulting exact functor is denoted by $\Ind\colon \Loc_{F'_n} \to \Loc_{F_n}$ and will be referred to as induction with respect to $F'_n/F_n$.

\begin{proposition}
There is a commutative diagram of spectra
\begin{equation}\label{eqn:induction}
\xymatrix{
K(\Loc_{F'_n}) \ar[r]^{\Ind} \ar[d]_{\varepsilon_{\underline{\nu}}} & K(\Loc_{F_n}) \ar[d]^{\varepsilon_{\underline{\nu}}} \\
K(k') \ar[r] & K(k).
}
\end{equation}
\end{proposition}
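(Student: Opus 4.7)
The plan is to lift the commutativity to the level of binary multi-complexes, and then pass to $K$-theory using naturality of Grayson's delooping and Saito's equivalence. To begin with, I would unravel the induction functor: since $F'_n/F_n$ is a finite extension of higher local fields in characteristic zero, the canonical map $\Omega^1_{F_n} \otimes_{F_n} F'_n \to \Omega^1_{F'_n}$ is an isomorphism, which identifies $E \otimes_{F'_n} \Omega^1_{F'_n}$ with $\Ind E \otimes_{F_n} \Omega^1_{F_n}$ and makes $\nabla$ match $\Ind \nabla$ under this identification. In particular the closed $F_n$-linearly independent tuple $\underline{\nu}$ is still $F'_n$-linearly independent (and closed) inside $\Omega^1_{F'_n}$, so that $\varepsilon_{\underline{\nu}}$ is defined on $\Loc_{F'_n}$.

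Next I would compare the two binary multi-complexes $\mathcal{E}^\blacksquare_{\nabla,\underline{\nu}}(E)$ and $\mathcal{E}^\blacksquare_{\Ind\nabla,\underline{\nu}}(\Ind E)$. For every subset $M \subset \{1,\dots,n\}$ the direct summand $\Omega_{\nabla}^{M,\underline{\nu}}(E)$ is canonically identified with $\Omega_{\Ind\nabla}^{M,\underline{\nu}}(\Ind E)$ as an abstract vector space; the two types of differentials ($\nabla_j$ and $\wedge \nu_i$) agree in both pictures because $\nabla_j$ is obtained by contracting $\nabla$ with a derivation in the $F_n$-dual basis of $\underline{\nu}$ --- which acts on $E$ in the same way whether we regard $E$ as an $F'_n$- or as an $F_n$-module --- and $\wedge \nu_i$ is purely multiplicative. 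The one feature that differs is the ambient $n$-Tate structure: over $F'_n$ the complex is built in $\Tate^n(k')$, while over $F_n$ it is built in $\Tate^n(k)$. These are related by the forgetful functor $\Phi \colon \Tate^n(k') \to \Tate^n(k)$ induced by the finite extension $k'/k$. This yields a strictly commutative square
\[
\xymatrix{
\Loc_{F'_n} \ar[r]^-{\mathcal{E}_{\underline{\nu}}} \ar[d]_-{\Ind} & (\Bin_{ex})^n(\Calk^n(k')) \ar[d]^-{\Phi_*} \\
\Loc_{F_n} \ar[r]_-{\mathcal{E}_{\underline{\nu}}} & (\Bin_{ex})^n(\Calk^n(k)).
}
\]

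Finally I would apply connective $K$-theory to this square and post-compose with Grayson's delooping $K((\Bin_{ex})^n \Calk^n(R)) \to \Omega^n K(\Tate^n(R))$ together with Saito's equivalence $K(\Tate^n(R)) \simeq K(R)$. Since both constructions are natural in $R$, and $\Phi$ is the $n$-fold iterated restriction-of-scalars functor, the induced map on target spectra is identified with the transfer $K(k') \to K(k)$ associated with the finite extension $k'/k$. The main obstacle will be precisely this last point of naturality: one has to verify that Saito's equivalence is compatible with restriction of scalars along a finite extension of ground rings, so that $K(\Phi)$ is indeed identified with the transfer. This reduces, by tracking the index maps in Saito's construction level by level, to the elementary fact that restriction of scalars along $k \hookrightarrow k'$ represents the transfer on connective $K$-theory $K(P_f(k')) \to K(P_f(k))$.
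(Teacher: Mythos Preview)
Your approach is essentially the same as the paper's: both construct the strictly commutative square of exact categories
\[
\xymatrix{
\Loc_{F'_n} \ar[r]^-{\Ind} \ar[d]_-{\mathcal{E}_{\underline{\nu}}} & \Loc_{F_n} \ar[d]^-{\mathcal{E}_{\underline{\nu}}} \\
\Bin_{ex}^n\Calk^n(k') \ar[r] & \Bin_{ex}^n\Calk^n(k)
}
\]
and then apply $K(-)$ together with the natural map $K(\Bin_{ex}^n\Calk^n(k)) \to K(k)$. You are more explicit than the paper about why the differentials match and you correctly flag the compatibility of Saito's equivalence with restriction of scalars, which the paper simply subsumes under the word ``natural''.
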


\begin{proof}
Note that every finite $F'_n$-vector space $E$ gives rise to an $n$-Tate $k'$-vector space. However, since $k'/k$ is a finite field extension, we can also view $E$ as an $n$-Tate $k$-vector space. Furthermore, the diagram of exact categories 
\[
\xymatrix{
P_f(F'_n) \ar[r] \ar[d] & P_f(F_n) \ar[d] \\
\Tate^n(k') \ar[r] & \Tate^n(k)
}
\]
commutes strictly.
We obtain the commutative diagram above by applying the $K$-theory functor $K(-)$ to the following strictly commutative diagram of exact categories
\[
\xymatrix{
\Loc_{F'_n} \ar[r]^{\Ind} \ar[d]_{\mathcal{E}_{\underline{\nu}}} & \Loc_{F_n} \ar[d]^{\mathcal{E}_{\underline{\nu}}} \\
\Bin_{ex}^n\Calk^n(k') \ar[r] & \Bin_{ex}^n\Calk^n(k)
}
\]
and using the natural map $K(\Bin_{ex}^n\Calk^n(k)) \to K(k)$.
\end{proof}

\subsection{Duality}

In this subsection we will prove the proposition below. We refer the reader to \cite{bghw} for the definition of duality for higher Tate objects.

\begin{proposition}\label{prop:duality}
There is a commutative diagram of spectra
\begin{equation}\label{eqn:dual}
\xymatrix{
K(\Loc_{F_n}) \ar[r]^{(-)^{\vee}} \ar[d]_{\varepsilon_{\underline{\nu}}} & K(\Loc_{F_n}) \ar[d]^{\varepsilon_{-\underline{\nu}}} \\
K(k) \ar[r]^{(-)^{\vee}} & K(k).
}
\end{equation}
\end{proposition}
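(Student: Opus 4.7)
The plan is to factor the verification of the square into the three building blocks of the epsilon-factor: the multi-complex construction $\mathcal{E}_{\underline{\nu}}$, Grayson's comparison $K((\Bin_{ex})^n(\Calk^n(k))) \to \Omega^nK(\Calk^n(k))$, and Saito's equivalence $K(\Tate^n(k)) \simeq K(k)$. Compatibility of the latter two with duality is formal, given that duality on $n$-Tate objects is set up in \cite{bghw} and is manifestly compatible with taking binary complexes and with passing to the Calkin quotient (since the ideal $\Proa$ is self-dual up to a shift). Saito's equivalence is compatible with duality up to the appropriate shift by construction. So the substantive content is a natural isomorphism
$$\mathcal{E}^{\blacksquare}_{\nabla^{\vee},-\underline{\nu}}(E^{\vee}) \;\cong\; \big(\mathcal{E}^{\blacksquare}_{\nabla,\underline{\nu}}(E)\big)^{\vee}$$
of binary multi-complexes in $(\Bin_{ex})^n(\Calk^n(k))$, up to the shift induced by Saito's equivalence.

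To construct this isomorphism vertex-by-vertex over the cube $\mathcal{P}(\{1,\dots,n\})$, I would use the higher residue pairing: for any $M \subset \{1,\dots,n\}$ with complement $M^c$, the wedge product $\nu_M \wedge \nu_{M^c} = \pm\, \nu_1 \wedge \cdots \wedge \nu_n$ identifies $F_n\langle \nu_M \rangle$ with $\Hom_{F_n}(F_n\langle \nu_{M^c} \rangle, F_n\langle \nu_1 \wedge \cdots \wedge \nu_n\rangle)$, and the iterated residue then identifies the topological dual (in $\Tate^n(k)$) of $E \otimes F_n\langle \nu_M \rangle$ with $E^{\vee} \otimes F_n\langle \nu_{M^c} \rangle$. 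This sends the vertex $M$ of the dual multi-complex to the vertex $M^c$ of the multi-complex attached to $(E^{\vee},\nabla^{\vee})$, which is the expected behaviour, as dualising flips the orientation of the cube.

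The two families of differentials then need to be matched under this identification. The connection components $\nabla_j$ dualise to the analogous components of $\nabla^{\vee}$ on $E^{\vee} \otimes F_n\langle \nu_{M^c} \rangle$, generalising the $n=1$ observation used in the proof of Proposition~\ref{prop:finite_dimensional}. For the binary differentials, the dual of $\wedge \nu_i \colon F_n\langle \nu_M \rangle \to F_n\langle \nu_{M\cup\{i\}} \rangle$ under the residue identification is, up to Koszul sign, a wedge with $\nu_i$ going from vertex $(M\cup\{i\})^c$ to $M^c$, with a sign determined by moving $\nu_i$ past $\nu_{M^c \setminus \{i\}}$. After performing the Koszul rearrangement required to re-express the dual of a multi-complex as a genuine multi-complex supported on the cube, these accumulated signs combine into the single global substitution $\underline{\nu} \mapsto -\underline{\nu}$.

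The principal obstacle is exactly this sign bookkeeping: verifying that the Koszul signs arising from dualising an $n$-fold multi-complex, together with the reindexing $M \leftrightarrow M^c$ and the sign from rewriting $\nu_M \wedge \nu_{M^c}$ in standard order, collapse to a uniform sign flip on $\underline{\nu}$. This is essentially combinatorial once the residue identification is in place. Once the isomorphism of binary multi-complexes has been produced, applying the $K$-theory functor and concatenating with the duality-compatibility of Grayson's map and of Saito's equivalence yields the commutative diagram \eqref{eqn:dual}.
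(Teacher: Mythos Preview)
Your proposal is correct and follows essentially the same route as the paper: identify the $n$-Tate dual of $\mathcal{E}^{\blacksquare}_{\nabla,\underline{\nu}}(E)$ with $\mathcal{E}^{\blacksquare}_{\nabla^{\vee},-\underline{\nu}}(E^{\vee})$ via the higher residue pairing (the paper records this as Corollary~\ref{cor:dual} and Corollary~\ref{cor:dual2}), and then pass to $K$-theory. The one place where the paper is slicker is the sign bookkeeping you flag as the ``principal obstacle'': rather than tracking Koszul signs directly, the paper observes that the dual binary multicomplex is $\iota^*\mathcal{E}^{\blacksquare}_{\nabla^{\vee},-\underline{\nu}}(E^{\vee})$, where $\iota^*$ negates all differentials, and then invokes the explicit isomorphism $A^{\bullet}\simeq \iota^*A^{\bullet}$ given by multiplication by $(-1)^i$ in degree $i$ (noting that this is only canonical up to a global sign, i.e.\ a $\mu_2$-torsor of choices). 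This packages your combinatorial verification into a single clean step.
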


Let $E$ be a finite-dimensional $F_n$-vector space. We denote by $E^{\vee}$ its $F_n$-linear dual. Since $E$ can be seen as an $n$-Tate $k$-vector space, it is also possible to consider the $k$-linear dual in the sense of $n$-Tate spaces, which we denote by $E'$. The following lemma relates the $F_n$-linear and $n$-Tate dual in a canonical fashion. It is the analogue of Serre duality for $n$-fields. Its proof is a straight-forward extension of the well-known case where $n=1$. 

\begin{lemma}\label{lemma:serre}
For every finite-dimensional $F_n$-vector space $E$ there is a natural isomorphism of $n$-Tate $k$-vector spaces 
$E \simeq E^{\vee} \otimes \Omega_{F_n}^n$ which is induced by the higher residue pairing $\Res (-,-)\colon E \times (E^{\vee} \otimes \Omega_{F_n}^n) \to k$.
\end{lemma}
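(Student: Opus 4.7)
The statement should presumably read $E' \simeq E^\vee \otimes \Omega_{F_n}^n$, where $E'$ denotes the $n$-Tate $k$-linear dual (both sides are contravariant functors of $E$, as required). Since every finite-dimensional $F_n$-vector space is free, and both $(-)'$ and $(-)^\vee \otimes \Omega_{F_n}^n$ are contravariant additive functors that preserve finite direct sums, I would first reduce to the case $E = F_n$. In that case the claim becomes: the higher residue pairing $F_n \times \Omega_{F_n}^n \to k$, $(f,\omega) \mapsto \Res(f\omega)$, is continuous in the $n$-Tate sense and induces an isomorphism $\Omega_{F_n}^n \xrightarrow{\sim} F_n'$.

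I would then proceed by induction on $n$. The base case $n = 1$ is the classical fact that for the Tate $k$-vector space $F_1 = k((t))$, with lattices $t^N k[[t]]$, the annihilator of $t^N k[[t]]$ under the residue pairing $(f, g\,dt) \mapsto \Res(fg\,dt)$ is precisely $t^{-N}k[[t]]\,dt$; this easily gives an isomorphism $\Omega_{F_1}^1 \simeq F_1'$. For the induction step, I would use the factorisation $F_n = F_{n-1}((t_n))$, which presents $F_n$ as an elementary Tate object in $\Vect_{F_{n-1}}$ via the lattices $t_n^N F_{n-1}[[t_n]]$. The relative residue in the $t_n$-direction yields a pairing $F_n \times \Omega_{F_n/F_{n-1}}^1 \to F_{n-1}$, and by the same argument as in the base case it identifies $\Omega_{F_n/F_{n-1}}^1$ with the $F_{n-1}$-linear Tate dual of $F_n$. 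Feeding in the inductive hypothesis for $F_{n-1}$ (applied termwise to the Tate presentation), the $n$-Tate $k$-linear dual of $F_n$ is identified with $\Omega_{F_n/F_{n-1}}^1 \otimes_{F_{n-1}} \Omega_{F_{n-1}}^{n-1}$, which matches $\Omega_{F_n}^n$ via the wedge product. Compatibility with the iterated residue $\Res_{F_n/k} = \Res_{F_{n-1}/k} \circ \Res_{F_n/F_{n-1}}$ shows that the composite identification is exactly the one induced by the higher residue pairing.

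The main obstacle is the compatibility between iterated Tate duals and iterated residues: one has to verify that the $n$-Tate $k$-linear dual of $F_n$, viewed through the tower $\Tate(\Vect_{F_{n-1}}) \hookrightarrow \Tate((n{-}1)\text{-}\Tate_k)$, agrees with first dualising over $F_{n-1}$ and then applying the $(n-1)$-Tate dual over $k$. This is a general statement about the symmetry of duality for iterated Tate categories, which I would establish using the explicit lattice description of duals and the fact that $F_{n-1}$ is itself $(n-1)$-Tate; it then feeds cleanly into the induction. Once this compatibility is in hand, naturality of the residue in $E$ (built from $F_n$-multilinearity) automatically extends the isomorphism from $E = F_n$ to arbitrary finite-dimensional $E$.
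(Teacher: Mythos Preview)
Your proposal is correct and matches the paper's approach: the paper omits the proof entirely, stating only that it ``is a straight-forward extension of the well-known case where $n=1$,'' and your inductive argument via the factorisation $F_n = F_{n-1}((t_n))$ together with iterated residues is precisely this extension. You are also right that the printed statement contains a typo and should read $E' \simeq E^{\vee} \otimes \Omega_{F_n}^n$, as is clear from the surrounding discussion and the subsequent corollaries.
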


For $p > 0$ we define $\Omega_{F_n}^{-p} = \Omega^{n-p}_{F_n} \otimes (\Omega_{F_n}^n)^{\dual}$.
It is easy to see that the $F_n$-linear dual of $\Omega^p_{F_n}$ is canonically isomorphic to $\Omega_{F_n}^{-p}$, by virtue of the twisted pairing $\wedge\colon \Omega_{F_n}^p \otimes \Omega_{F_n}^{n-p} \to \Omega_{F_n}^n$. 
With respect to this notation we obtain the following isomorphisms from Lemma \ref{lemma:serre} as an immediate consequence.

\begin{corollary}\label{cor:dual}
We have natural isomorphisms of $n$-Tate vector spaces $(E \otimes \Omega_{F_n}^p)' \simeq E^{\dual} \otimes \Omega_{F_n}^{n-p}$. 
\end{corollary}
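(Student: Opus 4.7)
The plan is to reduce the statement directly to Lemma \ref{lemma:serre} applied to the auxiliary finite-dimensional $F_n$-vector space $V := E \otimes \Omega_{F_n}^p$, followed by a routine manipulation of the $F_n$-linear duals of exterior powers. Concretely, Lemma \ref{lemma:serre} produces a canonical identification
$$V' \simeq V^{\vee} \otimes \Omega_{F_n}^n$$
of $n$-Tate $k$-vector spaces via the higher residue pairing. So the question reduces to a purely $F_n$-linear calculation of $V^{\vee}$.

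Next, I would use that $F_n$-linear duality is compatible with tensor products of finite-dimensional $F_n$-vector spaces, giving $V^{\vee} \simeq E^{\vee} \otimes (\Omega_{F_n}^p)^{\vee}$. The paragraph preceding the corollary already records the natural identification $(\Omega_{F_n}^p)^{\vee} \simeq \Omega_{F_n}^{-p} = \Omega_{F_n}^{n-p} \otimes (\Omega_{F_n}^n)^{\vee}$ induced by the wedge pairing $\wedge\colon \Omega_{F_n}^p \otimes \Omega_{F_n}^{n-p} \to \Omega_{F_n}^n$; nondegeneracy of this pairing is immediate because $\Omega_{F_n}^{\bullet}$ is the exterior algebra on the basis $dt_1,\dots,dt_n$ of $\Omega_{F_n}^1$.

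Plugging this into the formula produced by Lemma \ref{lemma:serre} yields
$$(E \otimes \Omega_{F_n}^p)' \simeq E^{\vee} \otimes \Omega_{F_n}^{n-p} \otimes (\Omega_{F_n}^n)^{\vee} \otimes \Omega_{F_n}^n \simeq E^{\vee} \otimes \Omega_{F_n}^{n-p},$$
where the last isomorphism is the canonical evaluation $(\Omega_{F_n}^n)^{\vee} \otimes \Omega_{F_n}^n \simeq F_n$ (both sides being rank one $F_n$-modules). Each individual identification is natural in $E$, so naturality of the composite is automatic.

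There is no genuine obstacle here once Lemma \ref{lemma:serre} is in hand; the only non-formal input is the Serre-duality type isomorphism provided by the higher residue pairing, and the rest is a bookkeeping computation with exterior powers. Thus the corollary is essentially an algebraic consequence of Lemma \ref{lemma:serre} together with Poincar\'e-type self-duality of the exterior algebra on $dt_1,\dots,dt_n$.
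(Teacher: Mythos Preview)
Your argument is correct and matches the paper's approach: the paper simply declares the corollary an immediate consequence of Lemma~\ref{lemma:serre} together with the identification $(\Omega_{F_n}^p)^{\vee} \simeq \Omega_{F_n}^{-p}$ recorded just before the statement, and you have spelled out exactly those steps. There is nothing to add.
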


With this isomorphism at hand we ask ourselves what the $n$-Tate dual of the map $E \otimes \Omega^p_{F_n} \to^{\nabla} E \otimes \Omega^{p+1}_{F_n}$ is. The answer to this question is given by the next corollary. We denote by $\nabla^{\vee}$ the dual connection on $E^{\vee}$.

\begin{corollary}\label{cor:dual2}
With respect to the isomorphism of Corollary \ref{cor:dual} we have $$\nabla' = -\nabla^{\vee}\colon E^{\vee} \otimes \Omega^{n-p-1}_{F_n} \to E^{\vee} \otimes \Omega^{n-p}_{F_n}.$$
\end{corollary}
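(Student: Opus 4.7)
The strategy is to verify the identity by unpacking the higher residue pairing and applying a Leibniz-style integration-by-parts. I would first make the isomorphism of Corollary \ref{cor:dual} explicit: the duality is given by the composition
\[
(E \otimes \Omega_{F_n}^p) \otimes_{F_n} (E^{\vee} \otimes \Omega_{F_n}^{n-p}) \xrightarrow{\langle -,-\rangle} \Omega_{F_n}^n \xrightarrow{\Res} k,
\]
where on simple tensors $\langle e \otimes \omega, \phi \otimes \eta \rangle := \phi(e)\,\omega \wedge \eta$. By definition of the $n$-Tate transpose, the map $\nabla'$ is then the unique morphism characterised by the adjunction
\[
\Res\bigl(\langle \nabla s,\, t\rangle\bigr) \;=\; \Res\bigl(\langle s,\, \nabla' t\rangle\bigr)
\]
for all $s \in E \otimes \Omega_{F_n}^p$ and $t \in E^{\vee} \otimes \Omega_{F_n}^{n-p-1}$. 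So the task reduces to identifying this adjoint with $-\nabla^{\vee}$.

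Next I would write down the Leibniz rule for the pairing. Starting from the defining property of the dual connection, $d\langle e, \phi\rangle = \langle \nabla e, \phi \rangle + \langle e, \nabla^{\vee}\phi\rangle$ in $\Omega_{F_n}^1$, and expanding $\nabla(e \otimes \omega) = \nabla(e) \wedge \omega + e \otimes d\omega$ together with the analogous formula for $\nabla^{\vee}$, a direct computation (tracking the sign $(-1)^p$ coming from $d(\omega \wedge \eta) = d\omega \wedge \eta + (-1)^p \omega \wedge d\eta$) yields the identity
\[
d\langle s,\, t\rangle \;=\; \langle \nabla s,\, t\rangle + (-1)^p\,\langle s,\, \nabla^{\vee} t\rangle
\]
in $\Omega_{F_n}^{n-1}$. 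This is the analogue of the classical $d(\alpha \wedge \beta)$-formula, twisted by the connection.

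Finally I would invoke the fundamental property of the higher residue that $\Res \circ d = 0$ on $\Omega_{F_n}^{n-1}$ (the residue vanishes on exact top forms, generalising $\Res(df) = 0$ in the $n=1$ case and established by iterated residue). Applying $\Res$ to the Leibniz identity kills the left-hand side, giving
\[
\Res\langle \nabla s, t\rangle \;=\; -(-1)^p\,\Res\langle s, \nabla^{\vee} t\rangle.
\]
Combining with the adjunction characterisation of $\nabla'$ identifies $\nabla'$ with $-\nabla^{\vee}$ up to the $p$-dependent sign $(-1)^p$. The last step is to normalise the isomorphism of Corollary \ref{cor:dual} by the standard sign convention (inserting the appropriate factor of $(-1)^{\binom{p}{?}}$ or analogous) so that the resulting $\nabla'$ matches $-\nabla^{\vee}$ uniformly in $p$. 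I expect that the bookkeeping of these sign conventions — rather than any genuine computational difficulty — will be the main obstacle; the substantive input is simply the combination of the Leibniz rule with the vanishing $\Res \circ d = 0$.
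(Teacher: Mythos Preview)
Your approach is correct and is essentially the same as the paper's: the paper also writes down the Leibniz identity $(\nabla s,t)+(s,\nabla^{\vee}t)=d(s,t)$, applies $\Res$ to kill the exact right-hand side, and reads off $\nabla'=-\nabla^{\vee}$. The only difference is that the paper's proof is terser and suppresses the $(-1)^p$ sign bookkeeping you flag; your worry about normalising the isomorphism of Corollary~\ref{cor:dual} to absorb that sign is legitimate, but the paper simply does not address it.
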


\begin{proof}
For $s \in E$ and $t \in E^{\vee}$ we have the relation
$(\nabla s, t) + (s,\nabla^{\vee} t) = d(s,t)$. Applying the higher residue to both sides, we obtain $\Res(\nabla s, t) + \Res(s,\nabla^{\vee} t) = 0$, since the right hand side is exact. This shows $\nabla' t = -\nabla^{\vee} t$ for all $t \in E^{\vee}$.
\end{proof}

For a complex $A^{\bullet}$ in an exact category $\Cc$ we denote by $\iota^*A^{\bullet}$ the complex obtained by replacing all differentials $d_i\colon A^i \to A^{i+1}$ by $-d^i$. We can construct an isomorphism $A^{\bullet} \simeq \iota^*A^{\bullet}$ in terms of a commutative ladder.
\[
\xymatrix{
\cdots \ar[r]^{d_i} & A^i \ar[r]^{d^i} \ar[d]_{(-1)^i} & A^{i+1} \ar[d]^{(-1)^{i+1}} \ar[r]^{d^{i+1}} & \cdots \\
\cdots \ar[r]^{-d_i} & A^i \ar[r]^{-d^i} & A^{i+1} \ar[r]^{-d_{i+1}} & \cdots
}
\]
However there is a second choice of an isomorphism, which replaces all vertical arrows $(-1)^{i}$ by $(-1)^{i+1}$. So we see that there is a $\mu_2$-torsor of natural isomorphisms $A^{\bullet} \simeq \iota^*A^{\bullet}$. The same remark applies to binary complexes.
As a consequence one sees that the proof of the proposition below produces not just one commutative square as in \eqref{eqn:dual}, but two (we remind the reader that in the context of $\infty$-categories, commutativity of diagrams is an extra structure and not a property). 

\begin{proof}[Proof of Proposition \ref{prop:duality}]
Corollary \ref{cor:dual2} implies that the $n$-Tate dual of the binary multicomplex $\mathcal{E}^{\blacksquare}_{\nabla,\underline{\nu}}(E)$ is given by $\iota^*\mathcal{E}^{\blacksquare}_{\nabla,-\underline{\nu}}(E^{\vee})$.
Using one the natural isomorphisms above we obtain a of commutative diagram of exact categories 
\[
\xymatrix{
\Loc_{F_n} \ar[r]^{(-)^{\vee}} \ar[d]_{\mathcal{E}_{\underline{\nu}}} & \Loc_{F_n} \ar[d]^{\mathcal{E}_{-\underline{\nu}}} \\
\Bin_{ex}^n\Calk^n(k) \ar[r]^{(-)^{\vee}} & \Bin_{ex}^n\Calk^n(k).
}
\]
Using the natural map of spectra $K(\Bin_{ex}^n\Calk^n(k)) \to K(k)$ we conclude the proof of the proposition.
\end{proof}

\section{The variation of epsilon-factors in families}

\subsection{Epsilon-factors for epsilon-nice families}

As before we let $k$ be a field of characteristic $0$. We denote by $R$ a commutative $k$-algebra, and use $F_{n}^R$ as shorthand for the commutative ring $R((t_1))\cdots((t_n))$. As an $R$-module it carries the structure of an $n$-Tate $R$-module, and hence every finitely generated projective $F_n^R$-module admits a natural realisation in the exact category $\Tate^n(R)$. Given $E \in P_f(F_n^R)$, and a relative connection $\nabla\colon E \to E \otimes_{F_n^R} \Omega_{F_n^R/R}$, we investigate when it is possible to define an epsilon-factor $\varepsilon_{\underline{\nu}}(E,\nabla)$ in the $K$-theory spectrum $K(R)$.

\begin{definition}
Let $(E,\nabla) \in \Loc_R(F_n)$ be a de Rham local system, such that the formal de Rham multi-complex $\Omega^{\blacksquare,R}_{\nabla}(E)$ is an acyclic multi-complex in $\Calk^n(R)$. We say that $(E,\nabla)$ is a epsilon-nice (or blissful) $R$-family of flat connections. The corresponding full subcategory of $\Loc^R(F_n)$ will be denoted by $\Loc^R_{\bliss}(F_n)$.
\end{definition}

Since formation of the de Rham complex $\Omega_{\nabla}^R(E)$ is an exact functor, and acyclic complexes form a fully exact and idempotent complete subcategory, we see that  $\Loc^R_{\bliss}(F_n)$ is an exact and idempotent complete category.

\begin{rmk}
\begin{itemize}
\item[(a)] In \cite{MR1988970} such $R$-families of flat connections are called nice.

\item[(b)] Intuitively speaking the irregularity type of the connection stays constant in epsilon-nice $R$-families. However, just as in \emph{loc. cit.} we prefer to define epsilon-nice families in the abstract manner above.
\end{itemize}
\end{rmk}

For epsilon-nice families one can define a (spectral) de Rham epsilon factor, for every $R$-linearly tuple of closed relative $1$-forms $(\nu_1,\dots,\nu_n) \in \Omega^1_{F_n} \widehat{\otimes}_k R$. 

\begin{definition}
We denote by $\mathcal{E}^{\blacksquare}_{\underline{\nu}}$ the exact functor $\Loc^R_{\bliss}(F_n) \to \Bin^n_{ex}\Calk^n(R)$. Applying the $K$-theory functor we obtain a morphism of spectra $\varepsilon_{\underline{\nu}}^R\colon K(\Loc^R_{\bliss}(F_n)) \to K(R)$.
\end{definition}

\subsection{The epsilon-crystal}

Epsilon-factors in families are naturally endowed with a partial connection (meaning that we cannot covariantly derive along all vector fields, but only along a fields belonging to a subbundle). Since we are working with homotopy points of $K$-theory spectra this is best formalised using a crystalline approach. Before stating the main result of this subsection we have to introduce some notation.

\begin{definition}
\begin{itemize}
\item[(a)] For a scheme $S$ we denote by $S^{dR}$ the presheaf on the category of schemes given by $T \mapsto S(T^{\red})$. We have a natural transformation $S \to S^{\red}$.

\item[(b)] A functor $H \colon \Sch \to \Spectra$ extends to a functor $H\colon \PSh(\Sch_{Nis}) \to \Spectra$, such that for a sheaf $F$, we have $$H(F) \simeq \lim_{T / F} H(T).$$
In particular we get a well-defined spectrum $H(S^{dR})$ for every scheme $S$. We call it the spectrum of $F$-crystals over $S$.
 
\item[(c)] We refer to the spectrum $K(S^{dR})$ as the spectrum of $K$-crystals of $S$. Homotopy points thereof will be referred to as $K$-crystals defined over $S$.
\end{itemize}
\end{definition}

Using the natural map of Nisnevich sheaves $K \to B^{\mathbb{Z}}\G_m$ we see that every $K$-crystal on $S$ gives rise to a (graded) line with a flat connection on $S$. We will now show that the formalism of $\varepsilon$-factors has a natural crystalline refinement. This is the higher rank generalisation of the epsilon connection defined in in section 3 of \cite{MR1988970} (furthermore they did not consider $K$-crystals). The proof is omitted, as it is based on the $(\mathbb{P}^1,\infty)$-invariance property of algebraic $K$-theory detailed in the companion paper \cite{companion} to construct the epsilon connection.

\begin{proposition}
Let $(E,\nabla)$ be an object of $\Loc(F_n)$, $S$ an affine $k$-scheme and $\underline{\nu}=(\nu_1,\dots,\nu_n)$ a basis of $\Omega^1_{F_n^S/S}$. 
There exists a map $\crepsilon_{\underline{\nu}}\colon K(\Loc(F_n)) \to K(S^{dR})$, which renders the diagram
\[
\xymatrix{
K(\Loc(F_n)) \ar[r]^-{\crepsilon_{\underline{\nu}}} \ar[rd]_{\varepsilon_{\underline{\nu}}} & K(S^{dR}) \ar[d] \\
& K(S)
}
\]
commutative.
\end{proposition}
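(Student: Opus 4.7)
The plan is to realise $\crepsilon_{\underline{\nu}}$ as a compatible system of spectrum maps indexed by test schemes mapping to $S^{dR}$, using the $R$-family $\varepsilon$-factors from the previous subsection and invoking the $(\mathbb{P}^1,\infty)$-invariance of algebraic $K$-theory to eliminate auxiliary choices. By the definition of $K(S^{dR})$ as the limit $\lim_{T \to S^{dR}} K(T)$, where $T = \Spec R$ runs over affine test schemes equipped with a morphism $f\colon T^{\red} \to S$, it suffices to produce morphisms $\crepsilon_{\underline{\nu},T}\colon K(\Loc(F_n)) \to K(R)$ which are natural in $T$ and which recover $\varepsilon_{\underline{\nu}}$ when $T = S$ is equipped with its tautological reduction map.

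Given $(T,f)$, first pull back $\underline{\nu}$ along $f$ to obtain an $R^{\red}$-linearly independent tuple of closed relative $1$-forms $f^{*}\underline{\nu} \in (\Omega^{1}_{F_n^{R^{\red}}/R^{\red}})^{n}$, and lift this to a tuple $\widetilde{\underline{\nu}} \in (\Omega^{1}_{F_n^{R}/R})^{n}$ of closed, $R$-linearly independent $1$-forms along the surjection $R \twoheadrightarrow R^{\red}$; this is possible by a formal lifting argument since the kernel is a nil ideal. The base-change of $(E,\nabla)$ to an $F_n^{R}$-family is epsilon-nice because its formal de Rham multi-complex is obtained from the Calkin-acyclic multi-complex $\Omega^{\blacksquare}_{\nabla}(E)$ by applying the exact functor $(-)\otimes_{k}R$. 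We then set $\crepsilon_{\underline{\nu},T}$ to be the composition $K(\Loc(F_n)) \to K(\Loc^{R}_{\bliss}(F_n)) \xrightarrow{\varepsilon^{R}_{\widetilde{\underline{\nu}}}} K(R)$.

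The main obstacle is showing that this construction is independent of the chosen lift, with sufficient coherence to produce a morphism of spectra rather than merely a collection of homotopy classes, and is natural in $T$. Given two lifts $\widetilde{\underline{\nu}}_0, \widetilde{\underline{\nu}}_1$, one interpolates via $\widetilde{\underline{\nu}}(s) = (1-s)\widetilde{\underline{\nu}}_0 + s\,\widetilde{\underline{\nu}}_1$ over $R[s]$, whose specialisations at $s \in \{0,1\}$ both reduce modulo the nilradical to the common pullback $f^{*}\underline{\nu}$. The associated $R[s]$-family $\varepsilon$-factor extends to an $R$-family parametrised by $\mathbb{P}^1_R$ by a polynomial reparametrisation, and the $(\mathbb{P}^1,\infty)$-invariance of $K$-theory, in the precise form established in the companion paper \cite{companion}, canonically identifies the specialisations at any two $R$-points of $\mathbb{P}^1_R$. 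This delicate step, together with its higher-categorical refinements, is precisely the content of the epsilon-connection construction in \emph{loc.~cit.}; it supplies the naturality in $T$ needed to assemble the $\crepsilon_{\underline{\nu},T}$ into a morphism of spectra landing in the limit $K(S^{dR})$.

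To verify the triangle, take $T = S$ with the canonical map $S \to S^{dR}$ induced by $S^{\red} \hookrightarrow S$. Here $\underline{\nu}$ itself is a valid lift of its restriction to $S^{\red}$, so $\widetilde{\underline{\nu}} = \underline{\nu}$ is a permissible choice, and the corresponding morphism $\crepsilon_{\underline{\nu},S}$ coincides tautologically with $\varepsilon_{\underline{\nu}}$. Commutativity of the triangle follows from this, together with the already established independence of lift.
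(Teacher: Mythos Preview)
The paper does not actually prove this proposition: immediately before stating it, the author writes that ``the proof is omitted, as it is based on the $(\mathbb{P}^1,\infty)$-invariance property of algebraic $K$-theory detailed in the companion paper \cite{companion} to construct the epsilon connection.'' Your sketch is therefore not competing against a proof in the paper but rather filling in the outline the author gestures at, and it does so along exactly the indicated lines: you unwind the limit definition of $K(S^{dR})$, produce the $R$-family $\varepsilon$-factor for each test scheme via a lift of $\underline{\nu}$, and then invoke $(\mathbb{P}^1,\infty)$-invariance from \cite{companion} to kill the dependence on the lift and obtain coherence. That is precisely the strategy the paper names.

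A couple of minor remarks on your sketch, since there is no paper proof to compare details with. Your linear interpolation $\widetilde{\underline{\nu}}(s)$ stays $F_n^{R[s]}$-linearly independent because the two lifts differ by something with coefficients in the nilradical, so the transition matrix has determinant a unit; it is worth saying this explicitly. The passage from $\mathbb{A}^1$ to $\mathbb{P}^1$ (``by a polynomial reparametrisation'') and the claim that the resulting homotopies assemble into a genuine map of spectra into the limit are exactly the points you defer to \cite{companion}, which is also what the paper does; just be aware that this is where the real work lies and that your write-up, like the paper's, is ultimately a pointer rather than a self-contained argument.
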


\subsection*{Concluding remarks}

In \cite{companion} the author introduced a theory of de Rham epsilon factors for $D$-modules on higher-dimensional varieties which is supported on points and gives rise to an epsilon-crystal. The definition in \emph{loc. cit.} is a continuation of Patel's theory of de Rham epsilon factors introduced in \cite{MR2981817}. We expect these two notions of de Rham epsilon factors to be equivalent.

\subsection*{Acknowledgements}

The author thanks Oliver Braunling, H\'el\`ene Esnault, Javier Fres\'an, Markus Roeser and Jesse Wolfson for many interesting conversations about epsilon factors, differential equations and Tate objects.

\bibliographystyle{amsalpha}
\bibliography{epsilon.bib}

\bigskip
\noindent E-mail: \url{m.groechenig@fu-berlin.de}\\
Address: FU Berlin, Arnimallee 3, 14195 Berlin

\end{document}